\newtheorem{thm}{Theorem}[section]
\newtheorem{lem}{Lemma}[section]
\newtheorem{cor}{Corollary}[section]
\newtheorem{rem}{Remark}[section]
\newcommand{\R}{\mathbb{R}}
\numberwithin{equation}{section}
\newcommand{\wto}{\rightharpoonup}
\makeatletter \@addtoreset{equation}{section} \makeatother
\newcounter{const}
\author[T. Gou]{Tianxiang Gou}
\address[T. Gou]{
\centerline{School of Mathematics and Statistics, Xi’an Jiaotong University,}
\centerline{710049, Xi’an, Shaanxi, China}}
\subjclass[2010] {35Q55; 35B35}
\keywords{Degenerate NLS, Ground states, Radial Symmetry, Uniqueness, Non-degeneracy.}
\email{tianxiang.gou@xjtu.edu.cn}
\title[DNLS]{Radial symmetry, uniqueness and non-degeneracy of solutions to degenerate nonlinear Schr\"odinger equations}
\thanks{{\it Acknowledgments.} T. Gou was supported by the National Natural Science Foundation of China (No. 12471113). The author warmly thanks Profs. Rupert Frank and Jean Dolbeault for useful discussions and for bringing the literature \cite{A, DEM} to his attention. He also thanks Prof. Rupert Frank for pointing out a mistake presented in the original proof of Theorem 2.1. {The author also would like to thank warmly the knowledgeable referee for useful remarks.}}
\thanks{{\it Conflict of interest statement}. The author declares that there is no conflict of interest.}
\thanks{{\it Data availability statement}. The author affirms that the paper has no associated data.}
\begin{document}

\begin{abstract} 
In this paper, we consider the radial symmetry, uniqueness and non-degeneracy of solutions to the degenerate nonlinear elliptic equation
$$
-\nabla \cdot \left(|x|^{2a} \nabla u\right) + \omega u=|u|^{p-2}u \quad \mbox{in} \,\, \R^d,
$$
where $d \geq 2$, $0<a<1$, $\omega>0$ and $2<p<\frac{2d}{d-2(1-a)}$. We proved that any ground state is radially symmetric and strictly decreasing in the radial direction. Moreover, we establish the uniqueness of ground states and derive the non-degeneracy of ground states in the corresponding radially symmetric Sobolev space. This affirms the {natural} conjectures posed recently in \cite{IS}.

\end{abstract}

\maketitle

\thispagestyle{empty}

\section{Introduction}

In this paper, we investigate quantitative properties of solitary wave solutions to the degenerate nonlinear Schr\"odinger equation (NLS)
\begin{align} \label{equt}
\textnormal{i} \partial_t \psi + \nabla \cdot \left(|x|^{2a} \nabla \psi\right) + |\psi|^{p-2} \psi=0 \quad \mbox{in} \,\, \R^d,
\end{align}
where $d \geq 2$, $0<a<1$ and $2<p<2^*_a:=\frac{2d}{d-2(1-a)}$. Here a solitary wave solution $\psi$ to \eqref{equt} is of the form 
$$
\psi(t, x)=e^{\textnormal{i} \omega t} u(x), \quad \omega \in \R.
$$
This implies that $u$ satisfies the profile equation
\begin{align} \label{equ}
-\nabla \cdot \left(|x|^{2a} \nabla u\right) + \omega u=|u|^{p-2}u \quad \mbox{in} \,\, \R^d.
\end{align}
Nonlinear Schr\"odinger equations driven by the operator $\textnormal{i} \partial_t \psi + \nabla \cdot \left(\sigma(x)\nabla \psi\right)$ with degenerate $\sigma(x)$ appear in various physical processes, such as plasma physics and the research of non-equilibrium magnetism, see \cite{DPL, DL, I} and references therein.

The aim of the paper is to further study solutions to \eqref{equ}. For this, we shall present some useful facts. Hereafter, we denote by $H^{1,a}(\R^d)$ the Sobolev space defined by the completion of $C^{\infty}_0(\R^d)$ under the norm
$$
\|u\|_{H^{1,a}}:=\left(\int_{\R^d} |x|^{2a}|\nabla u|^2 + |u|^2 \,dx \right)^{\frac 12}.
$$
And we denote by $H^{1,a}_{rad}(\R^d)$ the subspace of $H^{1,a}(\R^d)$ consisting of radially symmetric functions in $H^{1,a}(\R^d)$. From the classical Caffarelli-Kohn-Nirenberg's inequalities in the celebrated work \cite{CKN}, we know that, for any $u \in H^{1,a}(\R^d)$,
\begin{align} \label{embedding}
\left(\int_{\R^d}|u|^q \,dx \right)^{\frac 1q} \lesssim \left( \int_{\R^d} |\nabla u|^2 |x|^{2a} \,dx\right)^{\frac{\theta}{2}}  \left( \int_{\R^d} |u|^2\,dx\right)^{\frac{1-\theta}{2}},
\end{align}
where 
$$
0 \leq \theta \leq 1, \quad \frac 1q=\frac 12-\frac{\theta(1-a)}{d}.
$$
Throughout the paper, we shall use the notation $X \lesssim Y$ to denote $X \leq CY$ for some proper constant $C>0$. As a consequence, by utilizing \eqref{embedding}, one gets that $H^{1,a}(\R^d)$ is embedded continuously into $L^q(\R^d)$ for $2 \leq q \leq 2_a^*$, where $L^{q}(\R^d)$ denotes the usual Lebesgue space equipped with the norm
$$
\|u\|_q:=\left(\int_{\R^d} |u|^q \,dx \right)^{\frac 1q}, \quad 1 \leq q<+\infty.
$$ 
Moreover, by using \cite[Proposition 2]{IS}, one knows that $H^{1,a}(\R^d)$ is embedded compactly into $L^q(\R^d)$ for $2 < q < 2_a^*$. 

When $\omega>0$, to derive the existence and spectral stability of {standing wave solutions} to \eqref{equ}, Iyer and Stefanov in \cite{IS} introduced the following minimization problem,
\begin{align} \label{min10}
\widetilde{m}:=\inf_{u \in H^{1,a}(\R^d) \backslash \{0\}} J[u],
\end{align}
where
\begin{align*} 
J[u]:=\frac{\int_{\R^d} |\nabla u|^2 |x|^{2a} \,dx + \omega \int_{\R^d} |u|^2 \,dx}{\left(\int_{\R^d} |u|^p \,dx\right)^{\frac 2 p}}.
\end{align*}
Applying the compact embedding in $H^{1,a}(\R^d)$, they obtained the compactness of any minimizing sequence to \eqref{min10}, which then leads to the existence of {the solutions} to \eqref{equ}. In addition, they established asymptotic behaviors of any positive solution in $H^1_a(\R^d)$ under the radial symmetry assumption. More precisely, they proved the following interesting result.

\begin{thm} \label{thmdecay} (\cite[Theorem 1]{IS})
Let $d \geq 2$, $0<a<1$, $\omega>0$ and $2<p<2^*_a$. Then the following assertions hold true.
\begin{itemize}
\item[$(\textnormal{i})$] There exists a {distributional positive solution} $u \in H^{1,a}(\R^d) \cap C^{\infty}(\R^d \backslash \{0\})$ to \eqref{equ}. 
\item[$(\textnormal{ii})$] If $u \in H^{1,a}(\R^d)$ is a positive solution to \eqref{equ}, then it satisfies the pointwise exponential bound
$$
0<u(x) \lesssim e^{-\delta |x|^{1-a}}, \quad x \in \R^d
$$
for some $\delta>0$. In case that the solution $u$ is radially symmetric, then it is continuous at zero and satisfies that
\begin{align*} 
u'(r)=-\frac{u^{p-1}(0)-\omega u(0)}{d} r^{1-2a} +o(r^{1-2a}), \quad r \to 0,
\end{align*}
$$
u''(r)=\frac{(2a-1) \left(u^{p-1}(0)-\omega u(0)\right)}{d} r^{-2a} +o(r^{-2a}), \quad r \to 0,
$$
where $u^{p-1}(0)-\omega u(0)>0$. In particular, if $0<a \leq 1/2$, then $u \in C^1(0, +\infty)$, if $1/2<a<1$, then $u'(r)$ blows up as $r \to 0^+$. Meanwhile, $u''$ always blows up at zero for any $0<a<1$.
\end{itemize}
\end{thm}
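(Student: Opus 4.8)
The plan is to establish the three assertions in order: existence and interior smoothness of a positive solution, the exponential bound for any positive solution, and the behaviour at the origin for radial solutions.

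\emph{Part (i).} I would work with the scale-invariant functional $J$ and, after normalizing $\|u\|_p=1$, minimize $\int_{\R^d}|x|^{2a}|\nabla u|^2+\omega\int_{\R^d}|u|^2$. Any minimizing sequence is bounded in $H^{1,a}(\R^d)$, hence (up to a subsequence) converges weakly there and — this is the crucial point — strongly in $L^p(\R^d)$ by the compact embedding \cite[Proposition 2]{IS}; the limit is therefore nonzero and, by weak lower semicontinuity of the norm, a minimizer, which may be taken nonnegative by replacing $u$ with $|u|$. Its Euler–Lagrange equation, after a constant rescaling absorbing the (positive) Lagrange multiplier $\widetilde m$, produces a nonnegative distributional solution $u\in H^{1,a}(\R^d)$ of \eqref{equ}. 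On $\R^d\setminus\{0\}$ the operator $-\nabla\cdot(|x|^{2a}\nabla\,\cdot)$ is uniformly elliptic with smooth coefficients, so a De Giorgi–Nash–Moser/Schauder bootstrap gives $u\in C^\infty(\R^d\setminus\{0\})$, and the Harnack inequality on the connected set $\R^d\setminus\{0\}$ ($d\ge 2$) upgrades $u\ge 0$ to $u>0$.

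\emph{Part (ii), exponential bound.} I would first show that any positive solution $u\in H^{1,a}(\R^d)$ tends to $0$ at infinity: rescaling the equation on unit balls $B_1(x_0)$ with $|x_0|$ large makes the coefficients uniformly elliptic and the zeroth-order terms bounded, so a Moser estimate gives $u(x_0)\lesssim\|u\|_{L^2(B_1(x_0))}\to 0$. Hence $u^{p-2}\le\omega/2$ for $|x|\ge R$, so $u$ is a subsolution of $Lw:=-\nabla\cdot(|x|^{2a}\nabla w)+\tfrac{\omega}{2}w\le 0$ there. A direct computation gives, for $w(x)=Me^{-\delta|x|^{1-a}}$, $Lw=Me^{-\delta|x|^{1-a}}\big\{\delta(1-a)(d+a-1)|x|^{a-1}-\delta^2(1-a)^2+\tfrac{\omega}{2}\big\}\ge 0$ as soon as $\delta\le(1-a)^{-1}\sqrt{\omega/2}$; choosing $M$ so that $w\ge u$ on $\partial B_R$ and applying the comparison principle on the exterior domain $\{|x|>R\}$ (both functions vanish at infinity and $L$ has a nonnegative zeroth-order term) yields $u\le w$ there, hence the claimed bound on all of $\R^d$.

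\emph{Part (ii), radial behaviour.} Here I would use the change of variables $s=r^{1-a}/(1-a)$, under which $-\tfrac{1}{r^{d-1}}(r^{d-1+2a}u')'$ becomes $-u_{ss}-\tfrac{n-1}{s}u_s$ with the (generally non-integer) dimension $n=d/(1-a)>2$; thus a radial solution of \eqref{equ} corresponds to a radial solution of $-u_{ss}-\tfrac{n-1}{s}u_s+\omega u=u^{p-1}$. Since $u$ is bounded near $0$ (degenerate elliptic regularity for the $A_2$-weight $|x|^{2a}$, or Moser iteration starting from $H^{1,a}(\R^d)\hookrightarrow L^{2_a^*}(\R^d)$), one integrates $(s^{n-1}u_s)'=s^{n-1}(\omega u-u^{p-1})$: the boundary term at $s=0$ must vanish (otherwise $u_s\sim A\,s^{1-n}$ would force $u$ unbounded, as $n>2$), so $s^{n-1}u_s(s)=\int_0^s\sigma^{n-1}(\omega u-u^{p-1})\,d\sigma$, giving $|u_s(s)|\lesssim s$; hence $u_s(s)\to 0$ and $u(s)$ has a finite limit $u(0)$, i.e. $u$ is continuous at $0$. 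Feeding this back in gives $u_s(s)=-\tfrac{C}{n}s+o(s)$ and $u_{ss}(s)=-\tfrac{C}{n}+o(1)$ with $C:=u^{p-1}(0)-\omega u(0)$; translating back via $u'(r)=u_s r^{-a}$ and $u''(r)=u_{ss}r^{-2a}-a\,u_s r^{-a-1}$, and using $n(1-a)=d$, produces exactly the stated expansions, from which the dichotomy ($u'$ bounded near $0$ iff $a\le 1/2$, $u''$ always unbounded) follows from the exponents $1-2a$ and $-2a$.

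\emph{Main obstacle.} The delicate point is the sign $C=u^{p-1}(0)-\omega u(0)>0$, which must be read off from the global behaviour of $u$. Working in the $s$-variable: if $u(0)^{p-2}<\omega$ then $u_s>0$ near $0$, and as long as $u<\omega^{1/(p-2)}$ the solution keeps increasing, so either $u$ converges upward to the constant $\omega^{1/(p-2)}$ (contradicting $u\to 0$) or it crosses that value; in the latter case the energy $E(s)=\tfrac12 u_s^2-F(u)$ with $F(u)=\tfrac{u^p}{p}-\tfrac{\omega}{2}u^2$ is nonincreasing with $E(s)\le E(0)=-F(u(0))$, which keeps $u$ bounded, so $u$ returns to the regime $u^{p-2}<\omega/2$, where the equation is a small perturbation of $u_{ss}+\tfrac{n-1}{s}u_s+\omega u=0$ and a Sturm comparison forces $u$ to have a zero, contradicting positivity; the borderline $u(0)^{p-2}=\omega$ is excluded since the only solution regular at $0$ with that value is the non-decaying constant $\omega^{1/(p-2)}$. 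Hence $u(0)^{p-2}>\omega$, i.e. $C>0$. I expect this ODE/phase-plane step, together with the a priori boundedness near the origin needed to begin the analysis, to be the technically demanding part; the remaining arguments are fairly standard.
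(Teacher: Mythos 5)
First, note that this theorem is not proved in the paper at all: it is quoted verbatim from Iyer--Stefanov \cite[Theorem 1]{IS}, so there is no internal proof to compare with; I can only assess your reconstruction on its own terms. Parts (i) and the exponential bound in (ii) are along standard and correct lines (constrained minimization plus compact embedding, FKS/Moser regularity away from the origin, comparison with $Me^{-\delta|x|^{1-a}}$ — your computation of $Lw$ and the admissible range of $\delta$ check out), and the change of variables $s=r^{1-a}/(1-a)$, $n=d/(1-a)$, together with the integrated equation $\left(s^{n-1}u_s\right)'=s^{n-1}\left(\omega u-u^{p-1}\right)$, does reproduce the stated expansions of $u'$ and $u''$ (the only loose end there is $a=1/2$, where the coefficient $2a-1$ vanishes and the expansion alone does not yield blow-up of $u''$).

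The genuine gap is in your ``main obstacle'' step, the proof that $C=u^{p-1}(0)-\omega u(0)>0$. In the regime $u^{p-2}<\omega/2$ the equation is a small perturbation of $u_{ss}+\frac{n-1}{s}u_s-\omega u=0$, not of $u_{ss}+\frac{n-1}{s}u_s+\omega u=0$: the zeroth-order coefficient is $u^{p-2}-\omega<0$, so the limiting equation is non-oscillatory (solutions behave like $s^{-(n-1)/2}e^{\pm\sqrt{\omega}\,s}$), and Sturm comparison forces no zero — indeed a positive solution decaying in exactly this regime is precisely what the true ground state does at infinity, so the contradiction you invoke never materializes. There is also a sign slip in the Lyapunov function: with $F(u)=\frac{u^p}{p}-\frac{\omega}{2}u^2$ the monotone quantity is $E=\frac12 u_s^2+F(u)$ (then $E'=-\frac{n-1}{s}u_s^2\le 0$), not $\frac12u_s^2-F(u)$. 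The fix is to use this corrected $E$ all the way rather than only for boundedness: since $u\to0$ and (by integrating the equation against the exponential bound) $u_s\to0$ as $s\to+\infty$, one has $E(s)\ge\lim_{t\to\infty}E(t)=0$ for all $s$, hence $F(u(0))=E(0)\ge0$; but if $u(0)\le\omega^{1/(p-2)}$ then $F(u(0))<0$, a contradiction. This yields $u(0)^{p-2}\ge p\omega/2>\omega$, hence $C>0$, and it also disposes of your borderline case without the separate ODE-uniqueness claim. As written, however, the crucial sign assertion of the theorem is not established by your argument.
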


\begin{rem}
In fact, while $\omega \leq 0$, by using Pohozaev's identities satisfied by solutions to \eqref{equ} (see \cite[Proposition 1]{IS}), one finds easily that there exists no {solutions} to \eqref{equ} in $H^{1,a}(\R^d)$.
\end{rem}

Note that, in Theorem \ref{thmdecay}, the asymptotic behaviors of the solution hold true provided that it is radially symmetric. It is conjectured in \cite{IS} that the solution to \eqref{equ} is indeed radially symmetric.
Furthermore, in \cite{IS}, Iyer and Stefanov conjectured that ground states to \eqref{equ} are unique and non-degenerate. {Here we say that $u \in H^{1,a}(\R^d) \backslash \{0\}$ is a ground state to \eqref{equ} if it holds that
$$
E[u]=\inf_{\phi \in \mathcal{N}} E[\phi],
$$
where $E$ is the underlying energy functional and $\mathcal{N}$ is the so-called Nehari manifold associated to \eqref{equ} defined respectively by
$$
E[\phi]:=\frac 12 \int_{\R^d} |\nabla \phi|^2 |x|^{2a} \,dx + \frac 12 \int_{\R^d} |\phi|^2 \,dx -\frac 1p \int_{\R^d} |\phi|^p \,dx,
$$
$$
\mathcal{N}:=\left\{\phi \in H^{1,a}(\R^d) \backslash\{0\} : \langle E'[\phi], \phi \rangle=0\right\}.
$$
} 
In this present paper, we shall give affirmative answers to the conjectures. And our results read as follows.

\begin{thm} \label{mainthm}
Let $d \geq 2$, $0<a<1$, $\omega>0$ and $2<p<2^*_a$. Then the following assertions hold true.
\begin{itemize}
\item [$(\textnormal{i})$] Any ground state to \eqref{equ} is radially symmetric and strictly decreasing in the radial direction.
\item [$(\textnormal{ii})$] There exists only one ground state to \eqref{equ} in $H^{1,a}(\R^d)$.
\item [$(\textnormal{iii})$]  Any ground state to \eqref{equ} is non-degenerate in $H^{1,a}_{rad}(\R^d)$.
\end{itemize}
\end{thm}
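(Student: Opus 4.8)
The plan is to establish (i) first and then deduce (ii) and (iii) by reducing \eqref{equ}, restricted to radial functions, to a non-degenerate model problem in the \emph{fractional dimension}
$$ N := \frac{d}{1-a} \in (d, +\infty). $$
Indeed, for a radial function $u = u(r)$ the change of variables $s = \frac{r^{1-a}}{1-a}$, $v(s) = u(r)$ transforms \eqref{equ} into
$$ -v'' - \frac{N-1}{s} v' + \omega v = |v|^{p-2} v \quad \text{on } (0, +\infty), $$
and a direct computation shows that $u \mapsto v$ is, up to an explicit multiplicative constant, an isomorphism of $H^{1,a}_{rad}(\R^d)$ onto the radial Sobolev space $H^1_{rad}(\R^N)$ which intertwines the associated energy functionals and Nehari manifolds; the condition $2 < p < 2^*_a$ becomes exactly $2 < p < \frac{2N}{N-2}$. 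We also record that $|x|^{2a}$ is a Muckenhoupt $A_2$ weight on $\R^d$ for $d \geq 2$, and that $-\nabla \cdot(|x|^{2a}\nabla \cdot)$ is locally uniformly elliptic on $\R^d \setminus \{0\}$, so that Harnack's inequality and the strong maximum principle are available there.

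\emph{Step 1: radial symmetry (assertion (i)).} Let $u$ be a ground state. A standard Nehari-manifold argument identifies ground states, up to scaling, with minimizers of the quotient $J$ in \eqref{min10}. Since $|\nabla |u|\,| \leq |\nabla u|$ a.e.\ while $\|\,|u|\,\|_2 = \|u\|_2$ and $\|\,|u|\,\|_p = \|u\|_p$, the function $|u|$ is again a minimizer, hence a ground state, so we may assume $u \geq 0$; then $u > 0$ on $\R^d \setminus \{0\}$ by the strong maximum principle. Let $u^*$ be the Schwarz symmetrization of $u$ about the origin. Because $|x|^{2a}$ is radially non-decreasing, a P\'olya--Szeg\H{o}-type inequality gives
$$ \int_{\R^d} |x|^{2a} |\nabla u^*|^2 \, dx \leq \int_{\R^d} |x|^{2a} |\nabla u|^2 \, dx, $$
while $\|u^*\|_2 = \|u\|_2$ and $\|u^*\|_p = \|u\|_p$; hence $J[u^*] \leq J[u]$. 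Minimality forces equality, so $u^*$ is itself a ground state. By the reduction above and an elementary ODE argument — the quantity $G(s) := \tfrac12 (v')^2 - \tfrac{\omega}{2} v^2 + \tfrac1p v^p$ is non-increasing with $G(+\infty) = 0$, so at a hypothetical critical point $v'(s_1) = 0$ one would get $v(s_1)^{p-2} \geq \tfrac{p\omega}{2} > \omega$ and hence $(s^{N-1} v')'(s_1) < 0$, contradicting $s^{N-1} v' < 0$ on $(0, s_1)$ — the radial profile of $u^*$ is strictly decreasing on $(0, +\infty)$, so the critical set of $u^*$ has measure zero. Feeding this into the equality case of the weighted P\'olya--Szeg\H{o} inequality, in the spirit of Brothers--Ziemer (and with no translational ambiguity, since $|x|^{2a}$ is not translation invariant), yields $u = u^*$. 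This proves (i).

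\emph{Step 2: uniqueness and non-degeneracy (assertions (ii), (iii)).} By (i), every ground state of \eqref{equ} is radial and positive, so under the change of variables it corresponds to a positive solution $v \in H^1_{rad}(\R^N)$ of $-\Delta v + \omega v = v^{p-1}$. By Kwong's uniqueness theorem — whose ODE-based proof, in which the dimension enters only as a parameter, extends verbatim to any real $N > 2$ in the subcritical range $2 < p < \frac{2N}{N-2}$ — such a solution is unique; combined with the existence part of Theorem \ref{thmdecay}, this gives exactly one ground state, which is (ii). For (iii), the same correspondence carries the linearized operator on $H^{1,a}_{rad}(\R^d)$ to $L_+ := -\Delta + \omega - (p-1) v^{p-2}$ restricted to the radial functions of $H^1_{rad}(\R^N)$, and the triviality of $\ker L_+$ in the radial sector is the classical radial non-degeneracy of the ground state for $-\Delta v + \omega v = v^{p-1}$, again ODE-based and valid for real $N > 2$. (Only the radial statement is claimed because the translational zero modes of the $\R^N$ model are non-radial and have no counterpart on $\R^d$; the non-radial part of the linearized spectrum of the degenerate operator would require a separate spherical-harmonic analysis with the singular weight.)

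\emph{Main obstacle.} The heart of the matter is Step 1, and specifically the two properties of the degenerate weight $|x|^{2a}$ used there: the P\'olya--Szeg\H{o}-type inequality for the radially increasing weight and, more delicately, its equality case. Both must be established in a setting where the operator is singular at the origin and, by Theorem \ref{thmdecay}, ground states are only $C^1$ there — or, when $1/2 < a < 1$, not even $C^1$ — so the standard co-area and regularity arguments behind Brothers--Ziemer's rigidity theorem have to be revisited. One could instead try to polarize $u$ with respect to hyperplanes through the origin, which leave $|x|^{2a}$ invariant and hence preserve $J$, and combine this with the strong maximum principle on $\R^d \setminus \{0\}$; but this route also requires care near the singular point and an extra argument to pass from reflection monotonicity across every such hyperplane to full radial symmetry.
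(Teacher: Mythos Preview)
Your reduction in Step~2 is correct and is a genuinely different route from the paper's for assertions (ii) and (iii). The substitution $s=r^{1-a}/(1-a)$, $v(s)=u(r)$ does carry the radial equation to $-v''-\frac{N-1}{s}v'+\omega v=v^{p-1}$ with $N=d/(1-a)>2$, and the norms, energy, and Nehari constraint transform as you claim (with $H^1_{rad}(\R^N)$ read as the weighted space $L^2((0,\infty),s^{N-1}ds)$ for non-integer $N$). Kwong's uniqueness and the radial non-degeneracy of the ground state are indeed ODE results in which $N$ is only a real parameter, so (ii) and (iii) follow cleanly. The paper instead stays in the original variable: for (ii) it builds a tailored Pohozaev quantity $J(r,u)$ \`a la Yanagida and runs a comparison argument, and for (iii) it analyzes the linearized radial ODE directly, showing any kernel element changes sign exactly once and then reaching a contradiction via two integral identities. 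Your approach is shorter and explains \emph{why} the degenerate problem inherits the classical rigidity; the paper's approach is self-contained and yields explicit control (e.g.\ the exponential decay of $u'$, $u''$ in Lemma~\ref{expdecay}) that your reduction gets for free from the classical theory.

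For Step~1 your outline matches the paper's strategy --- minimize $J$, apply a weighted P\'olya--Szeg\H{o} inequality, and then exploit the equality case --- but the reference you lean on is not quite the right one. Brothers--Ziemer treats the unweighted case, and the radially increasing weight $|x|^{2a}$ changes the rigidity analysis; what one actually needs is the weighted isoperimetric inequality and its equality characterization from Betta--Brock--Mercaldo--Posteraro, which is exactly how the paper closes the argument (the key point being that for $\mu\geq 1$ the map $\tau\mapsto\tau^\mu$ is strictly convex, forcing the level sets of $u$ to be balls). The low regularity at the origin that you flag is not an obstruction there: the coarea computations are carried out on $\R^d\setminus\{0\}$, where $u\in C^\infty$. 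So your ``main obstacle'' is real but already resolved in the literature you should be citing; once you replace Brothers--Ziemer by the weighted result, Step~1 goes through.
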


\begin{rem}
From Theorem \ref{mainthm}, we then conclude that Theorem \ref{thmdecay} remains true when the {radial symmetry} assumption is removed.
\end{rem}

To prove the radial symmetry of ground states to \eqref{equ}, we are inspired by the classical polarization arguments (see for example \cite{BS}). Let $u \in H^{1,a}(\R^d)$ be a ground state to \eqref{equ}. First, by the maximum principle, we know that it is indeed positive. Then, relying on the polarization arguments, we are able to show that 
\begin{align} \label{r11} 
\int_{\R^d} |\nabla u^*|^2 |x|^{2a} \,dx \leq \int_{\R^d} |\nabla u|^2 |x|^{2a} \,dx,
\end{align}
see Lemma \ref{lemr1}, where $u^*$ denotes the symmetric-decreasing rearrangement of the function $u$. 
Actually, when $a=0$, then \eqref{r11} reduces to the well-known P\'olya–Szeg\"o type inequality (see for example \cite[Theorem 3.20]{BDL} or \cite[Lemma 7.17]{LL})
\begin{align} \label{r110} 
\int_{\R^d} |\nabla u^*|^2 \,dx \leq \int_{\R^d} |\nabla u|^2 \,dx.
\end{align}
Since $u \in H^{1,a}(\R^d)$ is a ground state to \eqref{equ}, depending on the variational characterization of the ground state energy,  we then justify that it is a minimizer to \eqref{min10}. As a consequence, applying \eqref{r11} and the fact (see for example \cite{LL})
$$
\int_{\R^d} |u^*|^q \,dx=\int_{\R^d} |u|^q \,dx,  \quad 1 \leq q<+\infty,
$$
we conclude that $u^* \in H^{1,a}(\R^d)$ is also a minimizer to \eqref{equ} and it holds that
\begin{align} \label{r111} 
\int_{\R^d} |\nabla u^*|^2 |x|^{2a} \,dx = \int_{\R^d} |\nabla u|^2 |x|^{2a} \,dx.
\end{align}
Subsequently, making use of the well-known coarea formula,
we get that
\begin{align} \label{r22}
\int_{(u^*)^{-1}(t)} |x|^{\mu} \, \mathcal{H}_{d-1}(dx) = \int_{u^{-1}(t)} |x|^{\mu} \, \mathcal{H}_{d-1}(dx),
\end{align}
where $\mu\geq 1$ is a constant, $\mathcal{H}_{d-1}$ denotes $(d-1)$ dimensional Hausdorff measure and $u^{-1}(t):=\{x \in \R^d : u(x)=t\}$ for $t>0$. At this point, by invoking \cite[Theorem 4.3]{BBMP}, we then have that
\begin{align} \label{r23}
\left\{ x\in \R^d : u(x) > t\right\} = \left\{ x\in \R^d : u^*(x) > t\right\}.
\end{align}
It indicates immediately that $u=u^*$, i.e $u$ is radially symmetric, because we know that
$$
u(x)=\int_0^{+\infty} \chi_{\left\{ x\in \R^d : u(x) > t\right\}}(x) \,dt, \quad u^*(x)=\int_0^{+\infty} \chi_{\left\{ x\in \R^d : u^*(x) > t\right\}}(x) \,dt,
$$ 
where $\chi_A$ denotes the characteristic function on the set $A$. It is necessary to point out that when $\mu=0$, by employing only \eqref{r22}, one cannot derive that \eqref{r23} is valid, see for instance \cite{BZ}. While $\mu \geq 1$, then the function $\tau \mapsto \tau^{\mu}$ is strictly increasing and convex on $[0, +\infty)$. This jointly with \eqref{r22} then leads to the desired result, see \cite[Theorem 4.3]{BBMP}.

Further, to assert that $u$ is strictly decreasing in the radial direction, we shall assume by contradiction and take into account of the well-posedness of solutions to the initial value problem for the ordinary differential equation
\begin{align}  \label{od1}
u''+\frac{d-1+2a}{r}u'-\frac{u}{r^{2a}}+\frac{u^{p-1}}{r^{2a}}=0.
\end{align}
In fact, since the solution $u$ is radially symmetric, then it solves necessarily \eqref{od1}.

When it comes to demonstrate that \eqref{equ} admits only one ground state in $H^{1,a}(\R^d)$, we shall follow the approach due to 
Yanagida in \cite{Ya}. Let $u \in H^{1,a}(\R^d)$ be a ground state to \eqref{equ}. By the assertion $(\textnormal{i})$ of Theorem \ref{thmdecay}, we know that $u$ is radially symmetric. Now we write $u=u(r)$ for $r=|x|$. It then satisfies the ordinary differential equation
\begin{align} \label{equ100}
\left\{
\begin{aligned}
&u''+\frac{d-1+2a}{r}u'-\frac{u}{r^{2a}}+\frac{u^{p-1}}{r^{2a}}=0, \\
&u(0)>0, \quad \lim_{r \to \infty} u(r)=0.
\end{aligned}
\right.
\end{align}
To adapt the approach to the problem under our consideration, the essence is to introduce the corresponding Pohozaev quantity $J(r, u)$ defined by 
$$
J(r,u):=\frac {A(r)}{2} (u')^2 + B(r) u' u + \frac {C(r)}{2}u^2-\frac{A(r)}{2}\frac{u^2}{r^{2a}} + \frac{A(r)}{p}\frac{u^{p}}{r^{2a}},
$$
where 
$$
A(r)=r^{\frac{4a+2(d-1+2a)p}{p+2}}, 
$$ 
$$
B(r)=\frac{2d-2+2a}{p+2}r^{\frac{4a+2(d-1+2a)p}{p+2}-1},
$$
$$
C(r)=\frac{2d-2+2a}{p+2} \left(d+2a -\frac{4a+2(d-1+2a)p}{p+2}\right)r^{\frac{4a+2(d-1+2a)p}{p+2}-2}.
$$
Later on, we need to check the properties that 
$$
\lim_{r \to 0}J(r, u)=0, \quad \lim_{r \to +\infty}J(r, u)=0,
$$ 
$J(\cdot, u) \not\equiv 0$ and $J(r,u) \geq 0$ for any $r>0$, see Lemmas  \ref{lem2} and \ref{lem3}. Since $u'(r)$ blows up as $r \to 0$ for $1/2<a<1$ by Theorem \ref{thmdecay}, then we are not able to employ directly the arguments in \cite{Ya} to obtain the desired result. 
Actually, in our scenario, Lemma \ref{expdecay}, which reveals the exponential decay of the derivatives of $u$, i.e. there exists $\widetilde{\delta}>0$ such that, for $1 \leq |\alpha| \leq 2$,
$$
|D^{\alpha} u(x)| \lesssim e^{-\widetilde{\delta} |x|^{1-a}}, \quad |x|>R, 
$$
where $R>0$ is a constant, and the asymptotic behavior
\begin{align*} 
u'(r)=-\frac{u^{p-1}(0)-\omega u(0)}{d} r^{1-2a} +o(r^{1-2a}), \quad r \to 0
\end{align*}
play an important role in the verification of the desired properties. At this stage, taking into account the properties and arguing by contradiction, we can reach a contradiction. This then gives the uniqueness of ground states to \eqref{equ}.

To attain that the ground state is non-degenerate in $H^{1,a}(\R^d)$, one needs to check that the kernel of the linearized operator $\mathcal{L}_+$ is trivial in $H^{1,a}(\R^d)$, where
$$
\mathcal{L}_+:=-\nabla \cdot \left(|x|^{2a} \nabla  \right) + \omega -(p-1) u^{p-2}.
$$
For this purpose, one can argue by contradiction that $Ker[\mathcal{L}_+] \neq 0$, i.e. there exists a {nontrivial} $v \in H^{1,a}(\R^d)$ solving the linearized equation
\begin{align} \label{equ100}
-\nabla \cdot \left(|x|^{2a} \nabla v \right) + v=(p-1) u^{p-2}v.
\end{align}
In the sequel, one of the crucial steps is to prove that the solution $v \in H^{1,a}(\R^d)$ to \eqref{equ100} is radially symmetric. To this end, one can take advantage of the classical spherical harmonics decomposition arguments. Let $Y_k^m$ be the eigenfunctions of $-\Delta_{\mathbb{S}^{d-1}}$ with respect to the corresponding eigenvalue $\mu_k=k(k+d-2)$ and the multiplicity of which is $l_k$, that is
$$
-\Delta_{\mathbb{S}^{d-1}} Y_k^m=\mu_k Y_k^m, \quad k \geq 0,
$$ 
$$
l_k=
\begin{pmatrix}
   k \\
d+k-1
\end{pmatrix}
-
\begin{pmatrix}
 k-2 \\
d+k-3
\end{pmatrix}, 
\quad k \geq 2, \quad l_1=d, \quad l_0=1.
$$ 
Then it holds that
\begin{align} \label{shd}
v(x)=\sum_{k=0}^{\infty} \sum_{m=1}^{l_k} v_k^m(r) Y_k^m(\theta),
\end{align}
where $r=|x|$, $\theta=\frac{x}{|x|} \in \mathbb{S}^{d-1}$ for $x \in \R^d \backslash \{0\}$
and
$$
v_k^m(r)=\int_{\mathbb{S}^{d-1}} v(r \theta) Y_k^m(\theta) \, d\theta. 
$$
Since $v \in H^{1,a}(\R^d)$ is a solution to \eqref{equ}, by \eqref{shd}, then $v_k^m$ satisfies the equation
$$
(v_k^m)''+\frac{d-1+2a}{r}(v_k^m)'-\frac{\mu_k}{r^2} v_k^m-\frac{v_k^m}{r^{2a}}+\frac{p-1}{r^{2a}}u^{p-2}v_k^m=0, \quad k \geq 0.
$$
At this point, to derive the radial symmetry of the solution, it suffices to demonstrate that $v_k^m=0$ for any $0 \leq m \leq l_k$ and $k \geq 1$. Define 
$$
\mathcal{L}_{+, k}:=-\partial_{rr}-\frac{d-1+2a}{r}\partial_r+\frac{\mu_k}{r^2}+\frac{1}{r^{2a}}-\frac{p-1}{r^{2a}}u^{p-2}.
$$
It is clear that $\mathcal{L}_{+, 0}<\mathcal{L}_{+, 1}<\mathcal{L}_{+, 2}< \cdots<\mathcal{L}_{+, k}< \cdots$. 
As a consequence, one only needs to show that $0$ is not an eigenvalue of $\mathcal{L}_{+, 1}$, because Morse's index of the ground state is $1$. For this, when $a=0$, arguing by contradiction and adapting the existing methods, one can reach a contradiction and the result follows. While $a \neq 0$, then the existing ones are not suitable to our problem and the situation becomes involved and delicate. This then forces us to discuss the non-degeneracy of the ground state in $H^{1,a}_{rad}(\R^d)$.

To investigate the non-degeneracy of the ground state in the radially symmetric framework, we first need to establish asymptotic behaviors of the solution $v \in H^{1,a}_{rad}(\R^d)$ to \eqref{equ100}, see Lemma \ref{lemdecay}, i.e. 
$$
v(r) \sim 1, \quad v'(r) =o\left(\frac 1 r\right), \quad r \to 0.
$$
Moreover, if $0<a<1/2$, then
$$
v(r) \sim r^{-\frac{d-1}{2}} e^{-\frac{1}{1-a}r^{1-a}}, \quad v'(r) \sim  r^{-\frac{d-1}{2}-a} e^{-\frac{1}{1-a}r^{1-a}} +o\left(\frac{1}{r^{\frac{d-1}{2}+2a}}e^{-\frac{1}{1-a}r^{1-a}}\right), \quad r \to +\infty.
$$ 
Next, we need to assert that if $v \in H^{1,a}_{rad}(\R^d)$ is a nontrivial solution to \eqref{equ100}, then it changes sign only once, see Lemma \ref{sign}. Indeed, the verification of those two results are based principally on ODE techniques. Finally, utilizing the previous results and arguing by contradiction, we are able to reach a contradiction. This completes the proof.

\begin{cor} \label{c}
Let $d \geq 2$, $0<a<1$, $\omega>0$ and $2<p<2^*_a$. Then there exists only one positive radially symmetric solution to \eqref{equ}.
\end{cor}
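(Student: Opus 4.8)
The plan is to observe that Corollary \ref{c} is already contained in the proofs of Theorem \ref{mainthm}(i)--(ii): once radial symmetry is assumed outright, the uniqueness argument of Yanagida \cite{Ya} built on the Pohozaev quantity $J(r,u)$ uses nothing about the variational (ground-state) characterization, only that $u$ is a positive radially symmetric $H^{1,a}(\R^d)$-solution of \eqref{equ} together with its decay and near-origin asymptotics. Accordingly I would split the proof into existence and uniqueness. Existence is immediate: Theorem \ref{thmdecay}(i) already furnishes a positive solution $u\in H^{1,a}(\R^d)\cap C^\infty(\R^d\setminus\{0\})$, and by Theorem \ref{mainthm}(i) any ground state --- which exists by the compactness of minimizing sequences for \eqref{min10} --- is positive and radially symmetric, so a positive radially symmetric solution exists.

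For uniqueness, I would let $u\in H^{1,a}(\R^d)$ be an arbitrary positive radially symmetric solution of \eqref{equ} and write $u=u(r)$, $r=|x|$. By interior elliptic regularity together with Theorem \ref{thmdecay}(ii) --- whose proof uses only the positivity of $u$, not that it minimizes anything --- $u$ is continuous at $0$, satisfies $0<u(x)\lesssim e^{-\delta|x|^{1-a}}$ and the stated expansions of $u'(r),u''(r)$ as $r\to 0$; likewise Lemma \ref{expdecay}, again ground-state independent, gives the exponential decay of $D^\alpha u$ for $1\le|\alpha|\le 2$. Thus $u$ solves the radial equation \eqref{od1} with $u(0)>0$ and $u(r)\to 0$ as $r\to+\infty$, i.e.\ precisely the setting in which $J(r,u)$ was analyzed, so Lemmas \ref{lem2} and \ref{lem3} apply and yield $\lim_{r\to 0}J(r,u)=\lim_{r\to+\infty}J(r,u)=0$, $J(\cdot,u)\not\equiv 0$ and $J(r,u)\ge 0$ for all $r>0$. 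Given two positive radially symmetric solutions $u_1,u_2$ with, after relabeling, $u_1(0)\neq u_2(0)$, the comparison argument of \cite{Ya} applied to $J(\cdot,u_1)$ and $J(\cdot,u_2)$ --- which requires only these properties of $J$ and, where the monotonicity of the $u_i$ enters, the strict decrease obtained as in the proof of Theorem \ref{mainthm}(i) by a contradiction argument using well-posedness of the initial value problem for \eqref{od1} --- forces $u_1\equiv u_2$, contradicting $u_1(0)\neq u_2(0)$. Combined with the existence part this proves Corollary \ref{c}.

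The only thing that needs genuine checking is that every ingredient of the proof of Theorem \ref{mainthm}(ii) is insensitive to replacing ``ground state'' by ``positive radially symmetric solution'': concretely, that the non-negativity $J(r,u)\ge 0$, the strict monotonicity of $u$, and the final comparison all go through verbatim under the weaker hypothesis, since in the proof of Theorem \ref{mainthm}(ii) radial symmetry of the solution was the only place the ground-state property was invoked. The sole technical nuisance is that for $1/2<a<1$ the derivative $u'(r)$ blows up as $r\to 0^+$, so standard ODE uniqueness at $r=0$ is unavailable; this is handled, exactly as in the arguments already carried out, by passing to the transformed equation and using the explicit asymptotics $u'(r)=-\frac{u^{p-1}(0)-\omega u(0)}{d}r^{1-2a}+o(r^{1-2a})$ as $r\to 0$, which control the boundary behavior of $J$ and of the Wronskian-type quantities entering the comparison.
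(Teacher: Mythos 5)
Your proposal is correct and takes essentially the same route the paper intends: the proof of Corollary \ref{c} is omitted there precisely because, as you observe, the uniqueness machinery of Section \ref{uniqueness} (Lemma \ref{expdecay} and Lemmas \ref{lem1}--\ref{lem3} feeding into the comparison argument of Theorem \ref{thm1}) uses only positivity, radial symmetry and the asymptotics of Theorem \ref{thmdecay}, never the variational ground-state property, which entered solely to obtain radial symmetry. Your closing remarks on the behavior at the origin and at infinity mirror the paper's own treatment, so nothing essential is missing.
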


To establish Corollary \ref{c}, one can follow closely the method we adapted to demonstrate the uniqueness of ground states to \eqref{equ} in Theorem \ref{mainthm} and the proof shall be omitted.

The proof of Theorem \ref{mainthm} is divided into three sections. In Section \ref{radial}, we consider the radial symmetry of ground states to \eqref{equ} and present the proof of the assertion $(\textnormal{i})$ of Theorem \ref{thmdecay}, see Theorem \ref{symmetry}. In Section \ref{uniqueness}, we investigate the uniqueness of ground states to \eqref{equ}, where contains the proof of the assertion $(\textnormal{ii})$ of Theorem \ref{thmdecay}, see Theorem \ref{thm1}. Section \ref{nond} is devote to the study of the non-degeneracy of ground state to \eqref{equ} and the proof of the assertion $(\textnormal{iii})$ of Theorem \ref{thmdecay}, see Theorem \ref{nd}.

For simplicity, we shall always assume that $d \geq 2$, $0<a<1$, $\omega=1$ and $2<p<2^*_a$ in the remaining parts.

\section{Radial Symmetry} \label{radial}

In this section, we are going to discuss the radial symmetry of ground states to \eqref{equ}. To this end, we shall take advantage of the classical polarization arguments developed in \cite{BS}. We denote by $\mathcal{H}$ the set of all half spaces in $\R^d$. And we denote by $\mathcal{H}_0$ the subset of $\mathcal{H}$ corresponding to $d-1$ dimensional Euclidean hyperplanes.
Let $H \in \mathcal{H}$ be a half space and $\sigma_H$ be the reflexion with respect to $\partial H$. The polarization of a measurable function $u : \R^d \to \R$ with respect to $H$ is defined by
\begin{align} \label{defp}
u_H(x):=\left\{
\begin{aligned}
&\max\left\{u(x), u(\sigma_H(x))\right\}, &\quad x \in H,\\
&\min \left\{u(x), u(\sigma_H(x))\right\}, &\quad x \in \R^d \backslash H.
\end{aligned}
\right.
\end{align}


\begin{lem} \label{lemr10}
Let $u \in H^{1,a}(\R^d)$ and $u \geq 0$. Then, for any $H \in \mathcal{H}_0$, it holds that $u_H \in H^{1,a}(\R^d)$ and 
\begin{align} \label{l2}
\int_{\R^d} u_H^2 \,dx=\int_{\R^d} u^2 \,dx,
\end{align}
\begin{align} \label{nablel2}
\int_{\R^d}|\nabla u_{H}|^2  |x|^{2a}\,dx=\int_{\R^d} |\nabla u|^2 |x|^{2a}\,dx.
\end{align}
\end{lem}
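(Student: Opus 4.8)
The plan is to exploit that $H\in\mathcal H_0$ has boundary hyperplane passing through the origin, so the reflection $\sigma_H$ is a linear orthogonal map and in particular $|\sigma_H(x)|=|x|$ for every $x\in\R^d$; hence Lebesgue measure and the weight $|x|^{2a}$ are both $\sigma_H$-invariant, and $w\mapsto w\circ\sigma_H$ is a linear isometry of $H^{1,a}(\R^d)$. The second ingredient is the elementary two-point identity that for a.e.\ $x$ the unordered pair $\{u_H(x),u_H(\sigma_H x)\}$ equals $\{u(x),u(\sigma_H x)\}$, which is immediate from \eqref{defp} since $\sigma_H x\in\R^d\setminus H$ when $x\in H$. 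Splitting $\R^d=H\cup(\R^d\setminus H)$ up to a null set and changing variables through $\sigma_H$ on the second piece, this identity gives \eqref{l2} at once (indeed $\int_{\R^d}|u_H|^q\,dx=\int_{\R^d}|u|^q\,dx$ for every $q\in[1,\infty)$, but only $q=2$ is needed here).

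For \eqref{nablel2} I would first handle $u$ Lipschitz with compact support, $u\ge 0$. Setting $\tilde u:=u\circ\sigma_H$, one has $u_H=\max(u,\tilde u)$ on $H$ and $u_H=\min(u,\tilde u)$ on $\R^d\setminus H$; since $u-\tilde u$ is continuous and $\sigma_H$-antisymmetric it vanishes on $\partial H$, which makes $u_H$ Lipschitz with compact support, hence $u_H\in H^{1,a}(\R^d)$ with weak gradient equal to its a.e.\ classical gradient. A short case analysis on the sets $\{u>\tilde u\}$, $\{u<\tilde u\}$, $\{u=\tilde u\}$, together with $|\nabla\tilde u(x)|=|\nabla u(\sigma_H x)|$, yields the two-point gradient identity
\[
|\nabla u_H(x)|^2+|\nabla u_H(\sigma_H x)|^2=|\nabla u(x)|^2+|\nabla u(\sigma_H x)|^2\qquad\text{for a.e.\ }x\in\R^d.
\]
Multiplying by $|x|^{2a}$, integrating over $H$, and changing variables on $\R^d\setminus H$ gives \eqref{nablel2} for such $u$; combined with \eqref{l2} this reads $\|u_H\|_{H^{1,a}}=\|u\|_{H^{1,a}}$.

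For a general $u\in H^{1,a}(\R^d)$ with $u\ge 0$ I would approximate. Since $C^\infty_0(\R^d)$ is dense in $H^{1,a}(\R^d)$, pick $w_n\in C^\infty_0(\R^d)$ with $w_n\to u$ in $H^{1,a}(\R^d)$ and set $u_n:=w_n^+$; each $u_n$ is nonnegative, Lipschitz with compact support, $u_n\to u^+=u$ in $L^2(\R^d)$, and $(u_n)$ is bounded in $H^{1,a}(\R^d)$ because $|u_n|\le|w_n|$ and $|\nabla u_n|\le|\nabla w_n|$ a.e. Polarization is an $L^2$-contraction --- from $|\max(a,b)-\max(c,d)|^2+|\min(a,b)-\min(c,d)|^2\le(a-c)^2+(b-d)^2$ for $a,b,c,d\ge 0$ --- so $u_{n,H}\to u_H$ in $L^2(\R^d)$, while by the previous step $\|u_{n,H}\|_{H^{1,a}}=\|u_n\|_{H^{1,a}}$ stays bounded. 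As $H^{1,a}(\R^d)$ is a Hilbert space, a subsequence of $(u_{n,H})$ converges weakly in $H^{1,a}(\R^d)$, necessarily to $u_H$; hence $u_H\in H^{1,a}(\R^d)$, which proves the membership assertion. Finally, once $u_H\in H^{1,a}(\R^d)$ is known, its weak gradient restricted to the open half-space $H$ (resp.\ to $\R^d\setminus\overline H$) coincides with $\nabla\max(u,\tilde u)$ (resp.\ $\nabla\min(u,\tilde u)$) by the lattice property of $H^{1,a}$ and the locality of weak gradients, so the two-point gradient identity above holds a.e.\ for this $u$ as well; integrating it against $|x|^{2a}$ over $H$ and changing variables produces \eqref{nablel2} as an exact equality, as desired.

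The main obstacle is precisely the assertion $u_H\in H^{1,a}(\R^d)$: one is gluing $\max(u,\tilde u)|_H$ to $\min(u,\tilde u)|_{\R^d\setminus H}$ across the hyperplane $\partial H$, and since $\partial H$ passes through the origin --- where the weight $|x|^{2a}$ degenerates --- a direct trace-theoretic gluing argument is delicate at that point. The approximation scheme above is designed exactly to avoid trace theory on the degenerate hyperplane, reducing the membership to the $L^2$-contractivity of polarization and the weak compactness of bounded sets in the Hilbert space $H^{1,a}(\R^d)$. The only routine point needing a word of care is that $u=u^+$ can be approximated in $H^{1,a}(\R^d)$ by nonnegative compactly supported Lipschitz functions; here this follows from the definition of $H^{1,a}(\R^d)$ as the $C^\infty_0$-completion together with the fact that $t\mapsto t^+$ is a normal contraction and that the weight is locally bounded away from $0$ and $\infty$ off the origin.
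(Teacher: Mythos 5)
Your argument is correct and follows essentially the same route as the paper: both proofs rest on the fact that $\sigma_H$ is an isometry fixing the origin (so Lebesgue measure and the weight $|x|^{2a}$ are invariant), the two-point rearrangement of values of $u_H$ at $x$ and $\sigma_H(x)$, and the piecewise description of $\nabla u_H$ on the sets $\{u\ge u\circ\sigma_H\}$ and $\{u< u\circ\sigma_H\}$, followed by a change of variables. The only difference is that you add an approximation-by-Lipschitz layer to justify $u_H\in H^{1,a}(\R^d)$ and the a.e.\ gradient formula, a point the paper's proof treats as a direct consequence of the same computation; this extra care is sound and does not change the substance of the argument.
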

\begin{proof}
Define $v:=u \circ \sigma_H$ for $H \in \mathcal{H}_0$. We then write $u_H$ defined by \eqref{defp} as
\begin{align} \label{defuh}
u_H(x)=\left\{
\begin{aligned}
\frac{u(x)+v(x)}{2} + \frac{|u(x)-v(x)|}{2}, &\quad x \in H,\\
\frac{u(x)+v(x)}{2} - \frac{|u(x)-v(x)|}{2}, &\quad x \in \R^d \backslash H.
\end{aligned}
\right.
\end{align}
Since $u \geq 0$, then $v \geq 0$. Therefore, by making a change of variable with $x \mapsto \sigma_H(x)$, we are able to compute that
\begin{align*}
\int_{\R^d} u_H^2\,dx&=\int_{H} u_H^2\,dx + \int_{\R^d \backslash H} u_H^2 \,dx \\
&=\frac 12 \int_{H} u^2\,dx +\frac 12 \int_{H} v^2\,dx + \frac 12 \int_{H} \left|u^2-v^2 \right|\,dx \\
& \quad +\frac 12 \int_{\R^d \backslash H} u^2\,dx +\frac 12 \int_{\R^d \backslash H} v^2\,dx - \frac 12 \int_{\R^d \backslash H} \left|u^2-v^2 \right|\,dx \\
&=\int_{\R^d} u^2\,dx.
\end{align*}
It then yields that \eqref{l2} holds true. Next we shall verify that \eqref{nablel2} holds true. In view of \eqref{defuh}, we are able to calculate that
$$
\nabla u_H(x)=\left\{
\begin{aligned}
&\nabla u(x) , &\quad x \in \left(H \cap \left\{x \in \R^d : u(x) \geq v(x)\right\} \right) \cup \left(\sigma_H(H) \cap \left\{x \in \R^d : u(x) < v(x)\right\}\right),\\
&\nabla v(x), &\quad x \in \left(H \cap \left\{x \in \R^d : u(x) < v(x)\right\} \right) \cup \left(\sigma_H(H) \cap \left\{x \in \R^d : u(x) \geq v(x)\right\}\right).
\end{aligned}
\right.
$$
Moreover, it holds that
$$
\sigma_{H}\left(H \cap \left\{x \in \R^d : u(x) < v(x)\right\}\right)=\sigma_{H}(H) \cap \left\{x \in \R^d : u(x) > v(x)\right\},
$$
$$
\sigma_{H}\left(H \cap \left\{x \in \R^d : u(x) \geq v(x)\right\}\right)=\sigma_{H}(H) \cap \left\{x \in \R^d : u(x)  \leq v(x)\right\}.
$$
Since $H \in \mathcal{H}_0$, then $|\sigma_H(x)|=|x|$. Consequently, by making a change of variable with $x \to \sigma_H(x)$, we conclude that
\begin{align*}
\int_{\R^d} |\nabla u_{H}|^2 |x|^{2a}\,dx&=\int_{H \cap \left\{x \in \R^d : u(x) \geq v(x)\right\}}|\nabla u|^2 |x|^{2a}\,dx + \int_{\sigma_H(H) \cap \left\{x \in \R^d : u(x) < v(x)\right\}} |\nabla u|^2|x|^{2a} \,dx \\
& \quad +\int_{H \cap \left\{x \in \R^d : u(x) < v(x)\right\}}|\nabla v|^2|x|^{2a} \,dx + \int_{\sigma_H(H) \cap \left\{x \in \R^d : u(x) \geq v(x)\right\}} |\nabla v|^2|x|^{2a} \,dx \\
&=\int_{\R^d}|\nabla u|^2  |x|^{2a}\,dx.
\end{align*} 
It implies that \eqref{nablel2} holds true. In particular, we have that $u_H \in H^{1,a}(\R^d)$. This then completes the proof.
\end{proof}

\begin{lem} \label{lemr1}
Let $u \in H^{1,a}(\R^d)$ and $u \geq 0$. Let $u^*$ be the symmetric-decreasing rearrangement of $u$. Then it holds that $u^* \in H^{1,a}(\R^d)$ and 
\begin{align} \label{r1}
\int_{\R^d} |\nabla u^*|^2 |x|^{2a} \,dx \leq \int_{\R^d} |\nabla u|^2 |x|^{2a} \,dx.
\end{align}
\end{lem}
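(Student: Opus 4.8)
The plan is to obtain the weighted P\'olya–Szeg\H{o} inequality \eqref{r1} by approximating the symmetric-decreasing rearrangement $u^*$ through a sequence of polarizations and then passing to the limit, exploiting the two identities \eqref{l2} and \eqref{nablel2} established in Lemma \ref{lemr10}. The key point that makes this strategy work for the weighted gradient is precisely that \eqref{nablel2} only requires $H\in\mathcal H_0$, i.e.\ that the reflection hyperplane $\partial H$ passes through the origin, so that $|\sigma_H(x)|=|x|$ and the weight $|x|^{2a}$ is preserved under polarization.

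First I would recall the classical approximation result for rearrangements by polarizations (see for example \cite{BS} or \cite{BBMP}): there exists a sequence $(H_n)_{n\ge1}\subset\mathcal H_0$ such that, setting $u_0:=u$ and $u_n:=((u_{n-1})_{H_n})$ inductively (each $u_n$ obtained from $u_{n-1}$ by a single polarization in a half-space through the origin), one has $u_n\to u^*$ in $L^2(\R^d)$. Since $u\in H^{1,a}(\R^d)$, Lemma \ref{lemr10} gives, for every $n$,
\begin{align} \label{pspl}
\int_{\R^d} u_n^2\,dx=\int_{\R^d} u^2\,dx,\qquad
\int_{\R^d}|\nabla u_n|^2|x|^{2a}\,dx=\int_{\R^d}|\nabla u|^2|x|^{2a}\,dx,
\end{align}
so in particular the sequence $(u_n)$ is bounded in $H^{1,a}(\R^d)$. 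Hence, up to a subsequence, $u_n\wto w$ weakly in $H^{1,a}(\R^d)$ for some $w\in H^{1,a}(\R^d)$; combined with the $L^2$-convergence $u_n\to u^*$ this forces $w=u^*$, so $u^*\in H^{1,a}(\R^d)$. Finally, by weak lower semicontinuity of the convex functional $v\mapsto\int_{\R^d}|\nabla v|^2|x|^{2a}\,dx$ on $H^{1,a}(\R^d)$, together with \eqref{pspl},
$$
\int_{\R^d}|\nabla u^*|^2|x|^{2a}\,dx\le\liminf_{n\to\infty}\int_{\R^d}|\nabla u_n|^2|x|^{2a}\,dx=\int_{\R^d}|\nabla u|^2|x|^{2a}\,dx,
$$
which is \eqref{r1}.

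The main obstacle I anticipate is the justification that the polarization sequence converges to the symmetric-decreasing rearrangement \emph{and} can be chosen entirely within $\mathcal H_0$; the standard statements allow general affine half-spaces, so one must either quote a version restricted to half-spaces through the origin or argue that since $u^*$ is already centered at the origin, translations are unnecessary. A second, more routine technical point is the lower semicontinuity of the weighted Dirichlet energy under weak $H^{1,a}$-convergence: this follows since $\|\,|x|^{a}\nabla(\cdot)\,\|_{L^2}$ is (a seminorm induced by) a Hilbert-space norm on $H^{1,a}(\R^d)$, hence weakly lower semicontinuous, but it should be stated carefully because the weight degenerates at the origin. One could alternatively avoid weak compactness altogether by invoking directly a known weighted P\'olya–Szeg\H{o} inequality for Caffarelli-Kohn-Nirenberg weights, but the polarization route is the most self-contained given what has already been set up in Lemma \ref{lemr10}.
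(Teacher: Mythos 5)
Your proposal is essentially the paper's own proof: the paper likewise takes the polarization sequence $\{H_n\}\subset\mathcal{H}_0$ from \cite{BS} with $u_{H_1\cdots H_n}\to u^*$ in $L^2(\R^d)$, applies Lemma \ref{lemr10} to get the conserved $L^2$ and weighted gradient norms, and concludes by weak convergence in $H^{1,a}(\R^d)$ together with weak lower semicontinuity of the norm. The caveat you flag, namely that the approximating half-spaces must be chosen with boundary through the origin so that the weight is preserved, is exactly the point the paper also disposes of by the citation to \cite{BS}, so your argument is at the same level of completeness as the published one.
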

\begin{proof}
Let $u \in H^{1,a}(\R^d)$ and $u \geq 0$.
It then follows from 
\cite{BS} that there exists a sequence $\{H_n\} \subset \mathcal{H}_0$ such that $u_{H_1H_2\cdots H_n} \to u^*$ in $L^2(\R^d)$ as $n \to \infty$. Define $u_n=u_{H_1H_2\cdots H_n}$.
Using Lemma \ref{lemr10}, we derive that $u_n \in H^{1,a}(\R^d)$, $\|u_n\|_2=\|u\|_2$ and  
\begin{align} \label{de}
\|\nabla u_n\|_{L^2(\R^d; |x|^{2a}\,dx)}=\|\nabla u\|_{L^2(\R^d; |x|^{2a}\,dx)}.
\end{align}
Hence $\{u_n\}$ is bounded in $H^{1,a}(\R^d)$ and $u_n \wto u^*$ in $H^{1,a}(\R^d)$ as $n \to \infty$. 
Therefore, by applying the weak lower semi-continuity of the norm, we obtain that
$$
\|\nabla u^*\|_{L^2(\R^d; |x|^{2a}\,dx)} \leq \|\nabla u\|_{L^2(\R^d; |x|^{2a}\,dx)}.
$$
Thus \eqref{r1} holds true and the proof is complete.
\end{proof}


\begin{thm} \label{symmetry}
Let $u \in H^{1,a}(\R^d)$ be a ground state to \eqref{equ}. Then it is radially symmetric and strictly decreasing in the radial direction.
\end{thm}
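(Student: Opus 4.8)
The plan is to prove that any ground state $u$ also minimizes the quotient $J$, to pass to its symmetric-decreasing rearrangement $u^*$, to use the equality case of the weighted P\'olya--Szeg\"o inequality of Lemma \ref{lemr1} to force $u=u^*$, and finally to promote monotonicity to strict monotonicity by an ODE uniqueness argument. To begin with, given a ground state $u$, since $|\nabla|u||=|\nabla u|$ a.e. and $\||u|\|_q=\|u\|_q$ for every $q$, one has $|u|\in\mathcal N$ and $E[|u|]=E[u]$, so $|u|$ is again a ground state, hence a nonnegative (distributional) solution of \eqref{equ}. As $|u|$ then satisfies $(-\nabla\cdot(|x|^{2a}\nabla\cdot)+1)|u|=|u|^{p-1}\ge 0$ and is smooth and not identically zero on the connected set $\R^d\setminus\{0\}$ (here $d\ge 2$ is used), the strong maximum principle for this locally uniformly elliptic operator yields $|u|>0$ on $\R^d\setminus\{0\}$, so $u$ has a fixed sign and we may assume $u>0$. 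Then, using the scale invariance of $J$, the fibering map $t\mapsto E[t\phi]$, and the identity $E[\phi]=\tfrac{p-2}{2p}J[\phi]^{p/(p-2)}$ valid for $\phi\in\mathcal N$, one checks that $\inf_{\mathcal N}E$ is attained precisely by those $\phi\in\mathcal N$ with $J[\phi]=\widetilde m$; in particular $J[u]=\widetilde m$.

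Next, Lemma \ref{lemr1} together with the equimeasurability $\|u^*\|_q=\|u\|_q$ gives $J[u^*]\le J[u]=\widetilde m$, so $u^*$ is also a minimizer of $J$; hence $u^*\in\mathcal N$ is a radially symmetric, non-increasing ground state, and comparing the two equal values of $J$ forces equality in \eqref{r1}, namely $\int_{\R^d}|\nabla u^*|^2|x|^{2a}\,dx=\int_{\R^d}|\nabla u|^2|x|^{2a}\,dx$. Plugging this equality into the coarea formula and reading off equality in the resulting Cauchy--Schwarz estimate over the level sets, I would obtain \eqref{r22}: the super-level sets of $u$ and of $u^*$ carry the same $|x|^{\mu}$-weighted $(d-1)$-perimeter for a suitable exponent $\mu\ge 1$. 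Since $\tau\mapsto\tau^{\mu}$ is then strictly increasing and convex on $[0,+\infty)$, the weighted isoperimetric rigidity of \cite[Theorem 4.3]{BBMP} upgrades \eqref{r22} to \eqref{r23}, i.e. $\{u>t\}=\{u^*>t\}$ for every $t>0$; the layer-cake representation then yields $u=u^*$, so $u$ is radially symmetric and non-increasing in $r=|x|$.

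To obtain strict monotonicity, write $u=u(r)$; being radial, $u$ solves \eqref{od1} on $(0,+\infty)$, an ODE with smooth coefficients there, so the Cauchy problem is locally uniquely solvable. If $u\equiv c$ on some shell $(r_1,r_2)$, substituting into \eqref{od1} gives $c^{p-1}=c$, hence $c=1$; but then the constant function $1$ and $u$ share the same Cauchy data at $r_2$, whence $u\equiv 1$ on $(0,+\infty)$, contradicting $u(r)\to 0$ as $r\to\infty$. A plateau abutting $r=0$ is excluded by Theorem \ref{thmdecay}, since $u'(r)=-\tfrac{u^{p-1}(0)-u(0)}{d}r^{1-2a}+o(r^{1-2a})$ with $u^{p-1}(0)-u(0)>0$ forces $u'(r)<0$ for small $r>0$. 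Hence $u$ is strictly decreasing in the radial direction, which is the assertion.

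The main obstacle is the passage, in the second paragraph, from equality of the weighted Dirichlet energies to equality of \emph{all} super-level sets. As in the classical Brothers--Ziemer equality analysis, the coarea/Cauchy--Schwarz step by itself only delivers equality of weighted perimeters, so one genuinely needs the weighted isoperimetric \emph{rigidity} result \cite[Theorem 4.3]{BBMP}; there the hypothesis $\mu\ge 1$ (equivalently, the strict convexity of $\tau\mapsto\tau^{\mu}$) is indispensable, since for $\mu=0$ an unweighted perimeter constraint cannot distinguish a centered ball from its translates and \eqref{r23} would fail (cf. \cite{BZ}). Some care is also needed to justify the coarea identities in the precise form \eqref{r22} — for instance, to ensure the distribution function of $u$ is absolutely continuous and that the relevant flat set of $u^*$ is Lebesgue-null — before the rigidity statement can be invoked.
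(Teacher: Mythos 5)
Your proposal follows essentially the same route as the paper: show that a ground state minimizes the quotient $J$, use the polarization-based Lemma \ref{lemr1} and equimeasurability to force equality of the weighted Dirichlet energies, run the coarea/level-set analysis and the weighted isoperimetric rigidity \cite[Theorem 4.3]{BBMP} (with $\mu\ge 1$) to conclude $u=u^*$, and exclude plateaus by ODE uniqueness for \eqref{od1} together with the exponential decay. The only point to state more carefully is that passing from the energy equality to \eqref{r22} is not just ``equality in Cauchy--Schwarz'': as in the paper one also needs the inequality $\int_{u^{-1}(t)}|x|^{\mu}\,\mathcal{H}_{d-1}(dx)\ge\int_{(u^*)^{-1}(t)}|x|^{\mu}\,\mathcal{H}_{d-1}(dx)$ from \cite[Theorem 2.1]{BBMP} combined with the identity \eqref{s1}, so that the integrated equality can be localized to almost every level $t$.
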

\begin{proof}
Let us assume that $u \in H^{1,a}(\R^d)$ is a ground state to \eqref{equ} at the ground state energy level $m>0$, namely
$$
m:=\inf_{\phi \in \mathcal{N}} E[\phi],
$$
where 
$$
E[\phi]=\frac 12 \int_{\R^d} |\nabla \phi|^2 |x|^{2a} \,dx + \frac 12 \int_{\R^d} |\phi|^2 \,dx -\frac 1p \int_{\R^d} |\phi|^p \,dx,
$$
$$
\mathcal{N}=\left\{\phi \in H^{1,a}(\R^d) \backslash\{0\} : \langle E'[\phi], \phi \rangle=0\right\}.
$$
It is well-known that the ground state energy $m$ admits the variational characterization
$$
m=E[u]=\inf_{\phi \in H^{1,a}(\R^d)\backslash\{0\}} \max_{t \geq 0} E[t \phi].
$$
Then we are able to calculate that
$$
\max_{t \geq 0}E[t \phi]=\frac{p-2}{2p}(J[\phi])^{\frac{p}{p-2}},
$$
where
$$
J[\phi]=\frac{\int_{\R^d} |\nabla \phi|^2 |x|^{2a} \,dx + \int_{\R^d} |\phi|^2 \,dx}{\left(\int_{\R^d} |\phi|^p \,dx\right)^{\frac 2 p}}.
$$
This immediately shows that
\begin{align} \label{gs1}
m=E[u]=\frac{p-2}{2p}\inf_{\phi \in H^{1,a}(\R^d)\backslash\{0\}}(J[\phi])^{\frac{p}{p-2}}
\end{align}
Moreover, since $u \in H^{1,a}(\R^d)$ is a ground state to \eqref{equ}, then
$$
\int_{\R^d} |\nabla u|^2 |x|^{2a} \,dx + \int_{\R^d} |u|^2 \,dx= \int_{\R^d} |u|^p \,dx.
$$
Therefore, by using \eqref{gs1}, we conclude that
\begin{align*}
m=E[u]&=\frac{p-2}{2p}\left(\int_{\R^d} |\nabla u|^2 |x|^{2a} \,dx + \int_{\R^d} |u|^2 \,dx\right)\\
&=\frac{p-2}{2p} (J[u])^{\frac{p}{p-2}}=\frac{p-2}{2p} \inf_{u \in H^{1,a}(\R^d) \backslash \{0\}} (J[\phi])^{\frac{p}{p-2}}.
\end{align*}
As a result, we know that $u \in H^{1,a}(\R^d)$ is a minimizer to the minimization problem
\begin{align} \label{wf}
\widetilde{m}:=\inf_{\phi \in H^{1,a}(\R^d) \backslash \{0\}} J[\phi],
\end{align}
where
$$
\widetilde{m}=\left(\frac{2pm}{p-2}\right)^{\frac{p-2}{p}}.
$$
Consequently, by Lemma \ref{lemr1} and the facts that $\|u^*\|_2=\|u\|_2$ and $\|u^*\|_p=\|u\|_p$, we derive that $u^* \in H^{1,a}(\R^d)$ is also a minimizer to \eqref{wf} and
\begin{align} \label{s0}
\int_{\R^d} |\nabla u^*|^2 |x|^{2a} \,dx = \int_{\R^d} |\nabla u|^2 |x|^{2a} \,dx.
\end{align}
This infers clearly that $u^* \in H^{1,a}(\R^d)$ is a also ground state to \eqref{equ}. At this point, employing Theorem \ref{thmdecay}, we get that $u^* \in C^{\infty}(\R^d  \backslash \{0\})$. 

In the following, we are going to prove that $u=u^*$. Since $u^*$ is radially symmetric, then we shall write $u^*=u^*(r)$ for $r=|x|$. And it holds that $u^* \in C^{\infty}(0, +\infty)$. 
Thanks to $u \in H^{1,a} (\R^d)$, by coarea formula, we know that, for any $t > 0$,
\begin{align} \label{pz}
\begin{split}
\left| \left\{ x \in \R^d : u(x) > t \right\}\right| 
&= \left|\left\{ x \in \R^d : |\nabla u(x)|=0\right\} \cap \left\{ x \in \R^d :  u(x)>t \right\} \right|  \\
& \quad + \left|\left\{ x \in \R^d : |\nabla u (x)|>0\right\} \cap \left\{ x \in \R^d :  u(x)>t \right\} \right| \\
&=\left|\left\{ x \in \R^d : |\nabla u(x)|=0\right\} \cap \left\{ x \in \R^d :  u(x) >t \right\} \right|  \\
& \quad + \int_t^{+\infty} \int_{u^{-1}(s)} |\nabla u|^{-1} \, \mathcal{H}_{d-1}(dx)ds,
\end{split}
\end{align}
where $|A|$ denotes the Lebesgue measure of the measurable set $A \subset \R^d$, $\mathcal{H}_{d-1}$ denotes $(d-1)$ dimensional Hausdorff measure and $u^{-1}(s):=\{x \in \R^d : u(x)=s\}$. 
Furthermore, since $u \in L^2(\R^d)$, we then get that, for any $t>0$,
\begin{align*} 
\left| \left\{ x \in \R^d : u(x) > t \right\}\right| \leq \frac{1}{t^2} \int_{\left\{ x \in \R^d : u(x) > t \right\}} |u|^2 \,dx<+\infty.
\end{align*}
This implies that, for any $t>0$, 
\begin{align} \label{bddu}
\left| \left\{ x \in \R^d : u(x) > t \right\}\right|<+\infty.
\end{align}
Coming back to \eqref{pz}, we then obtain that
$$
\int_t^{+\infty} \int_{u^{-1}(s)} |\nabla u|^{-1} \, \mathcal{H}_{d-1}(dx)ds < + \infty.
$$
It infers that, for a.e. $t > 0$,
$$
\int_{u^{-1}(t)} |\nabla u|^{-1} \, \mathcal{H}_{d-1}(dx)<+\infty.
$$
Similarly, we are able to conclude that, for a.e. $t > 0$,
$$
\int_{(u^*)^{-1}(t)} |\nabla u^*|^{-1} \, \mathcal{H}_{d-1}(dx)<+\infty.
$$
As a consequence of coarea formula, we are able to derive that
\begin{align} \label{c11}
\int_{\R^d} |\nabla u|^2 |x|^{2a} \,dx=\int_0^{+\infty} \int_{u^{-1}(t)} |\nabla u| |x|^{2a} \, \mathcal{H}_{d-1}(dx)dt.
\end{align}
Applying H\"older's inequality, we get that
$$
\int_{u^{-1}(t)}  |x|^{\mu} \, \mathcal{H}_{d-1}(dx) \leq \left(\int_{u^{-1}(t)} |\nabla u| |x|^{2a} \, \mathcal{H}_{d-1}(dx)\right)^{\frac 12} \left(\int_{u^{-1}(t)} \frac{|x|^{2(\mu-a)}}{|\nabla u|} \,\mathcal{H}_{d-1}(dx)\right)^{\frac 12},
$$
where $\mu \geq 1$ is a constant. Taking into account \eqref{c11}, we then obtain that
\begin{align} \label{s2}
\int_{\R^d} |\nabla u|^2 |x|^{2a} \,dx \geq  \int_0^{+\infty} \left(\int_{u^{-1}(t)}  |x|^{\mu} \, \mathcal{H}_{d-1}(dx)\right)^2\left(\int_{u^{-1}(t)} \frac{|x|^{2(\mu-a)}}{|\nabla u|}  \,\mathcal{H}_{d-1}(dx)\right)^{-1} \,dt.
\end{align}
Since $u^*$ is radially symmetry, then $|\nabla u^*|$ and $|x|$ are constants on $(u^*)^{-1}(t)$ for any $t>0$. This readily indicates that
$$
\int_{(u^*)^{-1}(t)}  |x|^{\mu} \, \mathcal{H}_{d-1}(dx) = \left(\int_{(u^*)^{-1}(t)} |\nabla u^*| |x|^{2a} \, \mathcal{H}_{d-1}(dx)\right)^{\frac 12} \left(\int_{(u^*)^{-1}(t)} \frac{|x|^{2(\mu-a)}}{|\nabla u^*|} \,\mathcal{H}_{d-1}(dx)\right)^{\frac 12}.
$$
Therefore, by coarea formula, we assert that
\begin{align} \label{s3}
\begin{split}
\int_{\R^d} |\nabla u^*|^2 |x|^{2a} \,dx &=\int_0^{+\infty} \int_{(u^*)^{-1}(t)} |\nabla u^*| |x|^{2a} \, \mathcal{H}_{d-1}(dx)dt \\
&=\int_0^{+\infty}  \left(\int_{(u^*)^{-1}(t)} |x|^{\mu} \, \mathcal{H}_{d-1}(dx)\right)^2\left(\int_{(u^*)^{-1}(t)} \frac{|x|^{2(\mu-a)}}{|\nabla u^*|} \,\mathcal{H}_{d-1}(dx)\right)^{-1} \,dt.
\end{split}
\end{align}
Since
$$
\int_{\left\{x \in \R^d : u(x)>t\right\}} |x|^{2(\mu-a)} \,dx=\int_{\left\{x \in \R^d : u^*(x)>t\right\}} |x|^{2(\mu-a)} \,dx,
$$
according to coarea formula, then
\begin{align} \label{s1}
\int_{u^{-1}(t)} \frac{|x|^{2(\mu-a)}}{|\nabla u|} \,\mathcal{H}_{d-1}(dx)=\int_{(u^*)^{-1}(t)} \frac{|x|^{2(\mu-a)}}{|\nabla u^*|} \,\mathcal{H}_{d-1}(dx).
\end{align}
Due to $\mu \geq 1$, then the function $\tau \mapsto \tau^{\mu}$ is strictly increasing and convex on $[0, +\infty)$. Moreover, since $u \in C^{\infty}(0, +\infty)$, then the boundary of $u^{-1}(t)$ is Lipschitz for any $t>0$. Invoking \cite[Theorem 2.1]{BBMP} and \eqref{bddu}, we then derive that
$$
\int_{u^{-1}(t)} |x|^{\mu} \, \mathcal{H}_{d-1}(dx) \geq \int_{(u^*)^{-1}(t)} |x|^{\mu} \, \mathcal{H}_{d-1}(dx).
$$
Combining this with \eqref{s0}, \eqref{s2}, \eqref{s3} and \eqref{s1} immediately indicates that
$$
\int_{(u^*)^{-1}(t)} |x|^{\mu} \, \mathcal{H}_{d-1}(dx) = \int_{u^{-1}(t)} |x|^{\mu} \, \mathcal{H}_{d-1}(dx).
$$
Hence, by \cite[Theorem 4.3]{BBMP}, we have that 
$$
\left\{ x\in \R^d : u(x) > t\right\} = \left\{ x\in \R^d : u^*(x) > t\right\}.
$$
This shows immediately that $u=u^*$, i.e $u$ is radially symmetric. 

Finally, we are going to show that $u$ is strictly decreasing in the radial direction. Since $u=u^*$, then it is radially symmetric and non-negative. This infers that $u$ satisfies the ordinary differential equation
\begin{align} \label{ode0}
u''+\frac{d-1+2a}{r}u'-\frac{u}{r^{2a}}+\frac{u^{p-1}}{r^{2a}}=0.
\end{align}
{Keeping in mind that $u$ is actually positive by the maximum principle,
let us now assume} by contradiction that there exists an interval $[r_1, r_2] \subset (0, +\infty)$ such that $u(r)=c>0$ for any $r \in [r_1, r_2]$, which means that $u''(r)=0$ and $u'(r)=0$ for any $r \in (r_1, r_2)$.  It then infers from \eqref{ode0} that $u(r)=u^{p-1}(r)$, i.e. $u(r)=1$ for any $r \in [r_1, r_2]$. Let $r_0 \in (r_1, r_2)$ be such that $u(r_0)=1$ and $u'(r_0)=0$. At this point, using the well-posedness of solution to the initial problem \eqref{ode0} with $u(r_0)=1$ and $u'(r_0)=0$, we then have that $u(r)=1$ for any $r \geq r_0$. This is a contradiction, because $u$ decays exponentially at infinity by Theorem \ref{thmdecay}. It then yields that $u$ is strictly decreasing in the radial direction. Thus the proof is complete.

\end{proof}


\section{Uniqueness} \label{uniqueness}

In this section, we are going to establish the uniqueness of ground states to \eqref{equ}. For this, we first need to prove the following result.

\begin{lem} \label{expdecay}
Let $u \in H^{1,a}(\R^d)$ be a ground state to \eqref{equ}. Then there exists $\widetilde{\delta}>0$ such that, for $1 \leq |\alpha| \leq 2$,
\begin{align} \label{decay10}
|D^{\alpha} u(x)| \lesssim e^{-\widetilde{\delta} |x|^{1-a}}, \quad |x|>R,
\end{align}
where $R>0$ is a constant.
\end{lem}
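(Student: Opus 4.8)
The plan is to exploit the fact that on any exterior region $\{|x|>R\}$ the equation \eqref{equ} is uniformly elliptic with smooth coefficients, so that \eqref{decay10} follows from interior elliptic regularity combined with the decay $u(x)\lesssim e^{-\delta|x|^{1-a}}$ already available from Theorem \ref{thmdecay}, once one rescales to compensate for the growth of the weight $|x|^{2a}$.

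First I would record the facts furnished by Theorem \ref{thmdecay}: $u>0$, $u\in C^\infty(\R^d\setminus\{0\})$, and there is $\delta>0$ with $0<u(x)\lesssim e^{-\delta|x|^{1-a}}$ on $\R^d$. Writing $\rho=|x_0|$ and $M_\rho:=\sup_{B_1(x_0)}u$ for $|x_0|\ge 2$, the monotonicity of $t\mapsto e^{-\delta t^{1-a}}$ on $[0,+\infty)$ together with the elementary bound $(\rho-1)^{1-a}\ge \rho^{1-a}-(1-a)$ (for $\rho\ge 2$) gives $M_\rho\lesssim e^{-\delta\rho^{1-a}}$ with a constant independent of $\rho$.

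Next, for $|x_0|=\rho\ge R_0$ with $R_0$ large, I would rescale: set $v(y):=u(x_0+y)$ for $y\in B_1(0)$ and divide \eqref{equ} (with $\omega=1$) by $\rho^{2a}$, so that, with $\widetilde A(y):=\rho^{-2a}|x_0+y|^{2a}$,
$$
-\nabla\cdot\big(\widetilde A(y)\nabla v\big)=\rho^{-2a}\big(v^{p-1}-v\big)=:F(y)\qquad\text{in }B_1(0).
$$
Since $|\nabla^k(|x|^{2a})|\lesssim|x|^{2a-k}$, for $\rho$ large the matrix $\widetilde A$ is smooth, uniformly elliptic, and bounded together with all its derivatives by constants independent of $\rho$; moreover $\|F\|_{L^\infty(B_1)}\le \rho^{-2a}\big(M_\rho^{p-1}+M_\rho\big)\lesssim M_\rho$ because $M_\rho\to 0$ and $p>2$. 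Interior $C^{1,\gamma}$ (equivalently $W^{2,q}$ with $q>d$) estimates for this equation then yield $\|v\|_{C^{1,\gamma}(B_{1/2})}\lesssim\|v\|_{L^\infty(B_1)}+\|F\|_{L^{q}(B_1)}\lesssim M_\rho$ with $\rho$-independent constants; evaluating at $y=0$ gives $|\nabla u(x_0)|\lesssim M_\rho\lesssim e^{-\delta\rho^{1-a}}$, which is \eqref{decay10} for $|\alpha|=1$. Then I would bootstrap once more: using the gradient bound, $\nabla F=\rho^{-2a}\big((p-1)v^{p-2}-1\big)\nabla v$ satisfies $\|F\|_{C^{0,\gamma}(B_{1/2})}\lesssim M_\rho$, and the interior Schauder estimate $\|v\|_{C^{2,\gamma}(B_{1/4})}\lesssim\|v\|_{L^\infty(B_{1/2})}+\|F\|_{C^{0,\gamma}(B_{1/2})}\lesssim M_\rho$ gives $|D^2 u(x_0)|\lesssim M_\rho\lesssim e^{-\delta\rho^{1-a}}$. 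Taking $R=R_0$ and any $\widetilde\delta\in(0,\delta]$ then proves \eqref{decay10}.

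The main obstacle is precisely the growth of the weight $|x|^{2a}$: on $\{|x|>R\}$ the equation is not uniformly elliptic with a single pair of ellipticity constants, so interior estimates cannot be applied on fixed unit balls directly. The rescaling $v(y)=u(x_0+y)$ combined with the division by $\rho^{2a}$ is what restores $\rho$-independent ellipticity and $\rho$-independent control of the coefficients and their derivatives; the other point requiring (minor) care is that the inhomogeneity $F$, and in the second step its Hölder norm, must be dominated by $M_\rho$, which uses $u\to 0$ at infinity to absorb the superlinear term $u^{p-1}\lesssim u$ and the boundedness of $\rho^{-2a}$.
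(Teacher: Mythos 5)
Your argument is correct, but it takes a genuinely different route from the paper. The paper first invokes the radial symmetry of the ground state (Theorem \ref{symmetry}) and works with the ODE: writing the equation as $\left(r^{d-1+2a}u'\right)'=r^{d-1}\left(u-u^{p-1}\right)$, it integrates on $[r,+\infty)$, shows $\lim_{r\to+\infty}r^{d-1+2a}u'(r)=0$ using the pointwise decay of $u$, and obtains $|u'(r)|\lesssim r^{-(d-1+2a)}\int_r^{+\infty}\tau^{d-1}u\,d\tau$, from which the bound for $|\alpha|=1$ follows and the bound for $|\alpha|=2$ is read off from the equation itself. You instead avoid radial symmetry entirely and run a rescaled interior elliptic estimate: dividing by $\rho^{2a}$ on unit balls $B_1(x_0)$, $|x_0|=\rho$ large, makes the operator uniformly elliptic with coefficient bounds independent of $\rho$, and then $W^{2,q}$/Schauder estimates transfer the sup bound $M_\rho\lesssim e^{-\delta\rho^{1-a}}$ (your concavity estimate $(\rho-1)^{1-a}\ge\rho^{1-a}-(1-a)$ is fine) to first and second derivatives; the absorption of $v^{p-1}$ into $M_\rho$ via $p>2$ and $M_\rho\to0$ is also correct. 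What each approach buys: the paper's ODE computation is shorter once symmetry is in hand and produces exactly the radial quantities $u'$, $u''$ used later in the Pohozaev-type arguments; your argument is more robust and slightly more general, since it applies to any positive $H^{1,a}$ solution with the pointwise decay of Theorem \ref{thmdecay} (no radial symmetry needed) and controls all derivatives $D^{\alpha}u$ directly, at the cost of invoking standard interior regularity machinery together with the rescaling trick to neutralize the unbounded weight $|x|^{2a}$. The only point to keep explicit in a final write-up is that all constants in the interior estimates are uniform in $\rho$ (which your normalization by $\rho^{2a}$ indeed guarantees) and that positivity of the ground state, needed to apply the decay bound of Theorem \ref{thmdecay}(ii), is the same maximum-principle fact the paper itself uses.
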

\begin{proof}
Since $u \in H^{1,a}(\R^d)$ is a ground state to \eqref{equ}, by Theorem \ref{symmetry}, then $u$ is radially symmetric and it satisfies the ordinary differential equation
\begin{align} \label{equ111}
u''+\frac{d-1+2a}{r}u'-\frac{u}{r^{2a}}+\frac{u^{p-1}}{r^{2a}}=0.
\end{align}
Let us write \eqref{equ111} as
\begin{align} \label{equ110}
\left(r^{d-1+2a} u'\right)'=r^{d-1} \left(u-u^{p-1}\right).
\end{align}
By integrating \eqref{equ110} on $[r_1, r_2]$ for $r_2>r_1>0$, we have that
$$
r_2^{d-1+2a} u'(r_2)-r_1^{d-1+2a} u'(r_1)=\int_{r_1}^{r_2} r^{d-1} \left(u-u^{p-1}\right) \,dr.
$$
Since $u$ decays exponentially at infinity by Theorem \ref{thmdecay}, then 
$$
r_2^{d-1+2a} u'(r_2)-r_1^{d-1+2a} u'(r_1) \to 0 \,\,\, \mbox{as}\,\, r_1, r_2 \to +\infty.
$$
It then indicates that $\lim_{r \to +\infty} r^{d-1+2a} u'(r)$ exists. Applying again the fact that $u$ decays exponentially at infinity, we further have that $ \lim_{r \to +\infty} r^{d-1+2a} u'(r)=0$. At this point, integrating \eqref{equ110} on $[r, +\infty)$ for $r>0$, we then conclude that
$$
r^{d-1+2a} u'(r)=-\int_r^{+\infty} \tau^{d-1} \left(u-u^{p-1}\right) \,d\tau.
$$
It then follows that there exists $R>0$ such that
$$
|u'(r)| \lesssim \int_r^{+\infty} \tau^{d-1} u\,d\tau, \quad r>R>0.
$$
Therefore, we know that \eqref{decay10} holds true for $|\alpha|=1$. Note that $u$ solves \eqref{equ111}. Then \eqref{decay10} holds true as well for $|\alpha|=2$. This completes the proof.
\end{proof}

In the sequel, we are going to adapt the approach due to Yanagida in \cite{Ya} to study the uniqueness of ground states to \eqref{equ}. Let $u \in H^{1,a}(\R^d)$ be a ground state to \eqref{equ}. Then, by Theorem \ref{symmetry} and the fact that $u$ is continuous at zero from Theorem \ref{thmdecay}, we are able to conclude that $u$ satisfies the ordinary differential equation
\begin{align} \label{equ1}
\left\{
\begin{aligned}
&u''+\frac{d-1+2a}{r}u'-\frac{u}{r^{2a}}+\frac{u^{p-1}}{r^{2a}}=0, \\
&u(0)>0, \quad \lim_{r \to \infty} u(r)=0.
\end{aligned}
\right.
\end{align}
To prove the uniqueness of ground states to \eqref{equ}, we shall introduce the corresponding Pohozaev quantity $J$ defined by
\begin{align} \label{defjru}
J(r, u):=\frac {A(r)}{2} (u')^2 + B(r) u' u + \frac {C(r)}{2}u^2-\frac{A(r)}{2}\frac{u^2}{r^{2a}} + \frac{A(r)}{p}\frac{u^{p}}{r^{2a}},
\end{align}
where $A, B$ and $C : (0, +\infty) \to \R$ are {unknown functions to be determined} later. Since $u$ satisfies \eqref{equ1}, then
it is simple to compute that
\begin{align} \label{defdj}
\begin{split}
\frac{d}{dr} J(r, u)&= \left(\frac{A_r}{2}-\frac{d-1+2a}{r} A + B\right) (u')^2 + \left(B_r-\frac{d-1+2a}{r} B+C\right)u'u \\
& \quad + \left(\frac{B}{r^{2a}} +\frac{C_r}{2}-\frac{A_r}{2r^{2a}}+\frac{aA}{r^{2a+1}}\right)u^2 + \left(\frac{A_r}{pr^{2a}}-\frac{2aA}{pr^{2a+1}}-\frac{B}{r^{2a}}\right)u^p.
\end{split}
\end{align}
{Let us now choose the functions $A, B$ and $C$ to satisfy the linear system}
$$
\frac{A_r}{2}-\frac{d-1+2a}{r} A + B=0, 
$$ 
$$
B_r-\frac{d-1+2a}{r}B +C=0,
$$ 
$$
\frac{A_r}{pr^{2a}}-\frac{2aA}{pr^{2a+1}}-\frac{B}{r^{2a}}=0.
$$
Therefore, we are able to calculate that
$$
A(r)=r^{\frac{4a+2(d-1+2a)p}{p+2}}, 
$$ 
$$
B(r)=\frac{2d-2+2a}{p+2}r^{\frac{4a+2(d-1+2a)p}{p+2}-1},
$$
$$
C(r)=\frac{2d-2+2a}{p+2} \left(d+2a -\frac{4a+2(d-1+2a)p}{p+2}\right)r^{\frac{4a+2(d-1+2a)p}{p+2}-2}.
$$
Going back to \eqref{defdj}, we then get that
\begin{align} \label{defg}
\frac{d}{dr} J(r,u)=G(r)u^2, 
\end{align}
where
\begin{align} \label{defg1}
\begin{split}
G(r)&=-\frac{(p-2)(d-1+a)}{p+2}r^{\frac{4a+2(d-1+2a)p}{p+2}-2a-1} \\
& \quad +\frac{2d-2+2a}{p+2} \left(d+2a -\frac{4a+2(d-1+2a)p}{p+2}\right) \\  
& \quad \times \left(\frac{2a+(d-1+2a)p}{p+2}-1\right)r^{\frac{4a+2(d-1+2a)p}{p+2}-3}.
\end{split}
\end{align}

\begin{lem} \label{lem1}
Let $u, v \in H^{1,a}(\R^d)$ be two ground states to \eqref{equ}. Then it holds that
\begin{align*} 
\frac{d}{dr} \left(\frac{v}{u}\right)=\frac{1}{r^{2a-1+d}u^2(r)} \int_0^r \tau^{d-1} \left(u^{p-1}-v^{p-1}\right) u v \,d\tau.
\end{align*}
\end{lem}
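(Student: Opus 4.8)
The plan is to derive the stated identity from the two ordinary differential equations satisfied by $u$ and $v$ by a classical Wronskian-type computation. Both $u$ and $v$ are ground states, hence by Theorem \ref{symmetry} they are radially symmetric, positive and solve
$$
w''+\frac{d-1+2a}{r}w'-\frac{w}{r^{2a}}+\frac{w^{p-1}}{r^{2a}}=0,
$$
which in divergence form reads $\bigl(r^{d-1+2a}w'\bigr)'=r^{d-1}\bigl(w-w^{p-1}\bigr)$. First I would multiply the equation for $v$ (in divergence form) by $u$ and the equation for $u$ by $v$, subtract, and observe that the left-hand side collapses to a total derivative:
$$
\frac{d}{dr}\Bigl(r^{d-1+2a}\bigl(v'u-u'v\bigr)\Bigr)=r^{d-1}\bigl(u^{p-1}-v^{p-1}\bigr)uv.
$$
Indeed $\bigl(r^{d-1+2a}v'\bigr)'u-\bigl(r^{d-1+2a}u'\bigr)'v=\frac{d}{dr}\bigl(r^{d-1+2a}(v'u-u'v)\bigr)$, while on the right the terms $r^{d-1}(v u - u v)=0$ cancel and only the nonlinear terms survive.

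Next I would integrate this identity over $[0,r]$. The key point is that the boundary term at $0$ vanishes: by Theorem \ref{thmdecay} both $u$ and $v$ are continuous at zero with $u'(\tau),v'(\tau)=o(\tau^{1-2a})$ as $\tau\to 0$, so $\tau^{d-1+2a}\bigl(v'(\tau)u(\tau)-u'(\tau)v(\tau)\bigr)=O\bigl(\tau^{d-1+2a}\cdot\tau^{1-2a}\bigr)=O(\tau^{d})\to 0$ since $d\geq 2$. Hence
$$
r^{d-1+2a}\bigl(v'(r)u(r)-u'(r)v(r)\bigr)=\int_0^r \tau^{d-1}\bigl(u^{p-1}-v^{p-1}\bigr)uv\,d\tau.
$$
Finally, dividing both sides by $r^{d-1+2a}u^2(r)$ and recognizing $\frac{v'u-u'v}{u^2}=\frac{d}{dr}\bigl(\frac{v}{u}\bigr)$ yields exactly the claimed formula.

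The only genuine subtlety is the justification of the vanishing boundary term at the origin, which is where the degeneracy of the operator enters; this is handled by the precise asymptotics in Theorem \ref{thmdecay}, noting in particular that even when $1/2<a<1$ (so that $u'$ blows up like $r^{1-2a}$) one still has $\tau^{d-1+2a}u'(\tau)=o(\tau^{d})\to 0$. I would also remark that $u(r)>0$ for all $r\geq 0$ (maximum principle together with the continuity at zero and the fact that $u(0)>0$), so the division by $u^2(r)$ is legitimate on all of $(0,+\infty)$ and the expression $\frac{v}{u}$ is well defined.
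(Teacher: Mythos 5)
Your proposal is correct and takes essentially the same route as the paper: a Wronskian-type computation from the divergence form $\bigl(r^{d-1+2a}w'\bigr)'=r^{d-1}\bigl(w-w^{p-1}\bigr)$, with the boundary term at the origin killed by the asymptotics of Theorem \ref{thmdecay} (the paper integrates each equation against the other solution on $[s,r]$ and lets $s\to0$, which is the same argument). Two harmless slips worth noting: Theorem \ref{thmdecay} gives $u'(r),v'(r)=O(r^{1-2a})$ rather than $o(r^{1-2a})$ (the limit $\tau^{d-1+2a}\bigl(v'u-u'v\bigr)\to0$ still follows since $d\ge2$), and the integrand actually produced by the subtraction is $\tau^{d-1}\bigl(u^{p-1}v-v^{p-1}u\bigr)=\tau^{d-1}\bigl(u^{p-2}-v^{p-2}\bigr)uv$ rather than $\bigl(u^{p-1}-v^{p-1}\bigr)uv$, a discrepancy already present in the paper's own statement and immaterial for the sign arguments used later.
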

\begin{proof}
By Theorem \ref{symmetry}, we first know that $u$ and $v$ are radially symmetric and satisfy \eqref{equ1}. Now multiplying \eqref{equ1} by $r^{d-1+2a}v$ and integrating on $[s, r]$ for $r>s>0$, we have that
\begin{align} \label{u1}
\int_s^r u'' v\tau^{d-1+2a} \,d\tau + (d-1+2a)\int_s^r u'{\tau^{d-2+2a}} v \,d\tau=\int_s^r uv \tau^{d-1} \,d\tau -\int_s^r u^{p-1}v \tau^{d-1}\,d\tau.
\end{align}
As a consequence of integration by parts, we obtain that
\begin{align}  \label{u12}
&\int_s^r u' v' \tau^{d-1+2a}\,d\tau+\int_s^r uv \tau^{d-1} \,d\tau -\int_s^r u^{p-1}v \tau^{d-1}\,d\tau=u'(\tau)v(\tau){\tau^{d-1+2a}} \Big\lvert_{s}^r.
\end{align}
Reversing the roles of $u$ by $v$, we can also get that
\begin{align}  \label{u13}
&\int_s^r u' v' \tau^{d-1+2a}\,d\tau+\int_s^r uv \tau^{d-1} \,d\tau -\int_s^r v^{p-1}u\tau^{d-1}\,d\tau={v'(\tau)}u(\tau){\tau^{d-1+2a}} \Big\lvert_{s}^r.
\end{align}
Therefore, by combining \eqref{u12} and \eqref{u13}, we derive that
\begin{align*} 
&\left({u'(r)}v(r)- {v'(r)}u(r)\right){r^{2a-1+d}}+\int_s^r \tau^{d-1} \left(u^{p-2}-v^{p-2}\right) u v \,d\tau ={u'(s)}v(s){s^{2a-1+d}}-{v'(s)}u(s){s^{2a-1+d}}.
\end{align*}
Invoking Theorem \ref{thmdecay}, we know that
\begin{align} \label{decay11}
\begin{split}
u'(r)&=-\frac{u^{p-1}(0)-u(0)}{d} r^{1-2a}+o(r^{1-2a}), \quad r \to 0, \\
v'(r)&=-\frac{v^{p-1}(0)-v(0)}{d} r^{1-2a}+o(r^{1-2a}), \quad r \to 0.
\end{split}
\end{align}
This shows that
$$
\lim_{s \to 0}{u'(s)}v(s){s^{d-1+2a}}=\lim_{s \to 0}{v'(s)}u(s){s^{d-1+2a}}=0.
$$
Accordingly, taking the limit as $s \to 0$, we obtain that
$$
\left({v'(r)}u(r)-{u'(r)}v(r) \right){r^{2a-1+d}}=\int_0^r \tau^{d-1} \left(u^{p-1}-v^{p-1}\right) u v \,d\tau.
$$
This implies readily the desired conclusion and the proof is complete.
\end{proof}

\begin{lem} \label{lem2}
Let $u, v \in H^{1,a}(\R^d)$ be two ground states to \eqref{equ} with $u(0)<v(0)$. If $J(r,u) \geq 0$ for any $r>0$, then it holds that
$$
\frac{d}{dr} \left(\frac{v}{u}\right)<0, \quad \forall \, r>0.
$$
\end{lem}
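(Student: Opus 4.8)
The plan is to argue by contradiction, combining the representation of $\frac{d}{dr}(v/u)$ from Lemma~\ref{lem1} with an auxiliary functional built out of $J(\cdot,u)$ and $J(\cdot,v)$. By Theorem~\ref{symmetry} and Theorem~\ref{thmdecay}, both $u$ and $v$ are positive, radially symmetric, continuous at the origin, of class $C^{\infty}$ on $(0,+\infty)$, solve the ODE in \eqref{equ1}, satisfy $u'(r),v'(r)=O(r^{1-2a})$ as $r\to 0^{+}$, and decay exponentially at infinity. Write $w:=v/u\in C^{\infty}(0,+\infty)$, $w>0$. By Lemma~\ref{lem1}, $\frac{d}{dr}w(r)$ has the same sign as
\[
P(r):=\int_{0}^{r}\tau^{d-1}\bigl(u^{p-1}-v^{p-1}\bigr)uv\,d\tau,
\]
and $P(0)=0$, $P'(r)=r^{d-1}\bigl(u^{p-1}(r)-v^{p-1}(r)\bigr)u(r)v(r)$, the latter being positive or negative according as $u(r)>v(r)$ or $u(r)<v(r)$. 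Since $w(0)=v(0)/u(0)>1$, we have $v>u$ near $0$, hence $P'<0$, hence $P<0$ and $w'<0$, on some interval $(0,\varepsilon)$.

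Assume the conclusion fails and set $r_{0}:=\inf\{r>0:\ w'(r)\ge 0\}\in(0,+\infty)$. By continuity $w'(r_{0})=0$, while $w'<0$ (so $P<0$) on $(0,r_{0})$; since $P(r_{0})=0$ with $P<0$ just before $r_{0}$, we get $P'(r_{0})\ge 0$, which forces $u(r_{0})\ge v(r_{0})$, i.e.\ $w(r_{0})\le 1$, and moreover $w'(r_{0})=0$ gives $\ell:=u'(r_{0})/u(r_{0})=v'(r_{0})/v(r_{0})$. Introduce $\Phi(r):=J(r,u)\,w(r)^{2}-J(r,v)$. Since $v$ also solves \eqref{equ1}, the computation leading to \eqref{defg} gives $\frac{d}{dr}J(r,v)=G(r)v^{2}$ as well, so the $G$-terms cancel and $\Phi'(r)=2\,J(r,u)\,w(r)\,w'(r)$; on $(0,r_{0})$ one has $J(r,u)\ge 0$, $w>0$ and $w'<0$, hence $\Phi$ is non-increasing there. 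Next, expanding $J(r,u)w^{2}-J(r,v)$ one sees that the $\tfrac{C(r)}{2}u^{2}$ and $-\tfrac{A(r)}{2r^{2a}}u^{2}$ contributions of $J(r,u)w^{2}$ cancel the corresponding terms of $J(r,v)$, leaving only terms carrying $A(r)$, $B(r)$ and $A(r)/r^{2a}$; since $A(r)=r^{\gamma}$ with $\gamma:=\tfrac{4a+2(d-1+2a)p}{p+2}$ and $\gamma-2a>0$, $\gamma+2-4a>0$, while $u'(r),v'(r)=O(r^{1-2a})$ and $u,v$ are bounded near $0$, a direct computation yields $\lim_{r\to 0^{+}}\Phi(r)=0$. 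Consequently $\Phi(r_{0})\le 0$.

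On the other hand, substituting $u'(r_{0})=\ell\,u(r_{0})$ and $v'(r_{0})=\ell\,v(r_{0})$ into \eqref{defjru}, the part of $J$ quadratic in $(u,u')$ reduces to $D\,u(r_{0})^{2}$ with the \emph{same} coefficient $D:=\tfrac{A(r_{0})}{2}\ell^{2}+B(r_{0})\ell+\tfrac{C(r_{0})}{2}-\tfrac{A(r_{0})}{2r_{0}^{2a}}$ for both $u$ and $v$, so that $J(r_{0},u)=D\,u(r_{0})^{2}+\tfrac{A(r_{0})}{p\,r_{0}^{2a}}u(r_{0})^{p}$ and likewise for $v$; therefore
\[
\Phi(r_{0})=\frac{A(r_{0})}{p\,r_{0}^{2a}}\,v(r_{0})^{2}\Bigl(u(r_{0})^{p-2}-v(r_{0})^{p-2}\Bigr)\ \ge\ 0,
\]
because $u(r_{0})\ge v(r_{0})>0$ and $p>2$. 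The two estimates force $\Phi(r_{0})=0$, hence $u(r_{0})=v(r_{0})$, and then $w'(r_{0})=0$ gives $v'(r_{0})=u'(r_{0})$. Thus $u$ and $v$ solve the ODE in \eqref{equ1} with identical Cauchy data at the regular point $r_{0}>0$; since the coefficients are smooth there and $s\mapsto s^{p-1}$ is locally Lipschitz on $[0,+\infty)$ ($p>2$), uniqueness for the initial value problem gives $u\equiv v$ on $(0,+\infty)$, whence $u(0)=v(0)$ by continuity, contradicting $u(0)<v(0)$. Therefore $\frac{d}{dr}(v/u)<0$ for all $r>0$.

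The main obstacle is identifying the right auxiliary quantity: one whose derivative is governed solely by the sign of $w'$ and the hypothesis $J(\cdot,u)\ge 0$, and whose value at a putative critical point $r_{0}$ of $w$ has a forced sign. The functional $\Phi=J(\cdot,u)w^{2}-J(\cdot,v)$ achieves both --- cancellation of the $G$-terms makes $\Phi'=2J(\cdot,u)ww'$, and at $r_{0}$ (where $u'/u=v'/v$) the non-derivative quadratic part of $J$ cancels as well, isolating the manifestly signed term $\tfrac{A}{p r^{2a}}v^{2}(u^{p-2}-v^{p-2})$. Verifying $\lim_{r\to 0^{+}}\Phi(r)=0$ --- where the precise near-origin asymptotics of $u',v'$ from Theorem~\ref{thmdecay} and the explicit forms of $A,B,C$ enter, and where individual summands of $J$ may themselves fail to vanish --- is the one genuinely computational point.
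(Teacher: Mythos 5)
Your proof is correct and follows essentially the same route as the paper: the same auxiliary function $X(r)=w^{2}J(r,u)-J(r,v)$ with $w=v/u$, the same cancellation of the $G$-terms giving $X'=2ww'J(\cdot,u)\le 0$, the same vanishing limit at the origin via \eqref{decay11}, \eqref{c0}, \eqref{c00}, and the same sign evaluation at the first critical point of $w$. The only (harmless) deviation is at that critical point: you obtain only the non-strict inequality $u(r_0)\ge v(r_0)$ from $P'(r_0)\ge 0$ and then dispose of the equality case by ODE uniqueness, whereas the paper gets the strict inequality $w(r_*)<1$ directly from Lemma \ref{lem1} (if $w(r_*)\ge 1$ then $w>1$ on $(0,r_*)$, so the integral is strictly negative and $w'(r_*)\neq 0$), which makes $X(r_*)>0$ and closes the contradiction without that extra step.
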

\begin{proof}
We shall assume by contradiction that the {conclusion} does not hold. Define $w={v}/{u}$. Since $u(0)<v(0)$, by Lemma \ref{lem1}, then there exists $r_0>0$ such that $w'(r)<0$ for any $0<r<r_0$. Define
$$
r_*:=\sup \left\{r>0: w'(\tau)<0, \,\, 0<\tau < r\right\}.
$$
It follows from the contradiction that $r_*<+\infty$. As a consequence, we get that $w'(r_*)=0$ and $w'(r)<0$ for any $0<r<r_*$. This implies that $w(r_*)<1$. If not, then we shall assume that $w(r^*) \geq 1$. From Lemma \ref{lem1}, we then know that $w'(r_*)>0$. This is impossible. Define
\begin{align} \label{defx}
X(r):=w^2J(r, u)-J(r, v), \quad r>0.
\end{align}
It is simple to calculate by \eqref{defjru} that
\begin{align} \label{defx1}
X(r)=\frac{A(r)}{2} \left(\frac{v^2(u')^2}{u^2}-(v')^2\right)+B(r) \left(\frac{u' v}{u}-v'\right)v+\frac{A(r)}{pr^{2a}} \left(u^{p-2}-v^{p-2}\right) v^2.
\end{align}
Observe that
\begin{align} \label{c0}
\frac{2a+(d-1+2a)p}{p+2}-a>0,
\end{align}
\begin{align} \label{c00}
1-2a+\frac{2a+(d-1+2a)p}{p+2}>0.
\end{align}
Therefore, applying \eqref{decay11}, we have that $\lim_{r \to 0} X(r)=0$. 
Since $w(r_*)<1$, $w'(r_*)=0$ and $u, v>0$, then
\begin{align} \label{c1}
X(r_*)=\frac{A(r_*)}{pr_*^{2a}} \left(u^{p-2}(r_*)-v^{p-2}(r_*)\right) v^2(r_*)>0.
\end{align}
Furthermore, applying \eqref{defg}, we see that
\begin{align} \label{defx2}
\frac{dX}{dr}=2ww'J(r,u)+w^2 \frac{d}{dr} J(r,u)-\frac{d}{dr}J(r,v)=2ww'J(r,u).
\end{align}
Since $J(r, u) \geq 0$ for any $r>0$ and $w'(r)<0$ for any $0<r<r^*$, then
$$
\frac{dX}{dr} \leq 0 , \quad 0<r<r_*,
$$
We now reach a contradiction from \eqref{c1} and the fact that $\lim_{r \to 0} X(r)=0$. This completes the proof.
\end{proof}

\begin{lem} \label{lem3}
Let $u \in H^{1,a}(\R^d)$ be a ground state to \eqref{equ}. Then it holds that $J(\cdot, u) \not\equiv 0$ and $J(r,u) \geq 0$ for any $r>0$.
\end{lem}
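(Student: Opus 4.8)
The plan is to combine the differential identity $\frac{d}{dr}J(r,u)=G(r)u^2$ recorded in \eqref{defg}--\eqref{defg1} with the behaviour of $r\mapsto J(r,u)$ at $r=0$ and $r=+\infty$; together these force $J(\cdot,u)$ to have a single--peak profile, from which both assertions follow. Throughout I abbreviate $\gamma:=\frac{4a+2(d-1+2a)p}{p+2}$, so that $A(r)=r^{\gamma}$ and $B(r),C(r)$ are nonzero constants times $r^{\gamma-1},r^{\gamma-2}$, and I note that $\frac{2a+(d-1+2a)p}{p+2}=\frac{\gamma}{2}$. For $r\to+\infty$: since $u$ is a ground state, Theorem \ref{thmdecay} and Lemma \ref{expdecay} give $u(r),|u'(r)|\lesssim e^{-\widetilde{\delta} r^{1-a}}$ for $r>R$, whereas $A,B,C$ grow only polynomially, so each of the five terms of \eqref{defjru} is dominated by a polynomial times $e^{-2\widetilde{\delta} r^{1-a}}$ and $\lim_{r\to+\infty}J(r,u)=0$.

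Next I would study $r\to0^+$. By Theorem \ref{thmdecay}, $u(r)\to u(0)>0$ and $u'(r)=-\frac{u^{p-1}(0)-u(0)}{d}r^{1-2a}+o(r^{1-2a})$. Substituting these into \eqref{defjru}, the terms $\frac{A}{2}(u')^2$, $Buu'$, $-\frac{A}{2r^{2a}}u^2$ and $\frac{A}{pr^{2a}}u^p$ are $O(r^{\gamma+2-4a})$, $O(r^{\gamma-2a})$, $O(r^{\gamma-2a})$, $O(r^{\gamma-2a})$, and the exponents $\gamma-2a$ and $\gamma+2-4a$ are positive --- these are precisely \eqref{c0} and \eqref{c00} multiplied by $2$ --- so all four tend to $0$. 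Hence $J(r,u)=\frac{C(r)}{2}u^2(r)+o(1)$. Now $C(r)=\frac{2d-2+2a}{p+2}(d+2a-\gamma)r^{\gamma-2}$ with $d+2a-\gamma=\frac{2(d+p)-p(d+2a)}{p+2}$, and this quantity is positive exactly when $d(p-2)<2p(1-a)$, i.e. when $p<2^*_a$; this is the one place where subcriticality is genuinely used. Since $2d-2+2a>0$ as well, $C(r)>0$ for $r>0$, whence $\liminf_{r\to0^+}J(r,u)\geq0$ (the limit being $0$, a positive constant, or $+\infty$ according as $\gamma>2$, $\gamma=2$ or $\gamma<2$).

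I would then read off the sign of $G$: by \eqref{defg1} and $\frac{2a+(d-1+2a)p}{p+2}=\frac{\gamma}{2}$ one has $G(r)=r^{\gamma-3}\bigl(\kappa_2-\kappa_1 r^{2-2a}\bigr)$ with $\kappa_1=\frac{(p-2)(d-1+a)}{p+2}>0$ and $\kappa_2=\frac{(2d-2+2a)(d+2a-\gamma)(\gamma-2)}{2(p+2)}$. As $\kappa_1>0$ and $2-2a>0$, $G$ is strictly negative for all large $r$ and has at most one zero on $(0,+\infty)$, where it changes from $+$ to $-$ (which happens precisely when $\kappa_2>0$, i.e. $\gamma>2$); if $\kappa_2\leq0$ then $G<0$ throughout $(0,+\infty)$. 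Since $\frac{d}{dr}J(r,u)=G(r)u^2$ and $u>0$, there is $r_*\in[0,+\infty)$ such that $J(\cdot,u)$ is strictly increasing on $(0,r_*)$ and strictly decreasing on $(r_*,+\infty)$; on $(r_*,+\infty)$ it decreases strictly to $0$ and hence stays positive, and on $(0,r_*)$ it increases from $\liminf_{r\to0^+}J(r,u)\geq0$ and hence also stays positive. Therefore $J(r,u)>0$ for every $r>0$, giving simultaneously $J(r,u)\geq0$ for all $r>0$ and $J(\cdot,u)\not\equiv0$.

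The step I expect to be the real obstacle is the analysis near $r=0$: one must order the exponents $\gamma+2-4a$, $\gamma-2a$, $\gamma-2$ correctly (all governed by \eqref{c0}, \eqref{c00} and the inequality $d+2a-\gamma>0$) and accommodate a feature absent when $a=0$, namely that $J(\cdot,u)$ need not vanish at the origin and may even blow up there, yet stays nonnegative --- which is exactly what the monotonicity argument requires. The remaining ingredients (exponential decay at infinity, the elementary discussion of the sign of $G$) are routine.
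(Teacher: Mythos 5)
Your argument is correct, and it rests on the same two pillars as the paper's own proof: the identity \eqref{defg} for $\frac{d}{dr}J(r,u)$ with $G$ given by \eqref{defg1}, and the decay of $u,u'$ at infinity from Theorem \ref{thmdecay} and Lemma \ref{expdecay}, which yields $J(r,u)\to 0$ as $r\to+\infty$. The difference lies in the organization and, more importantly, in the treatment of the origin. The paper argues by contradiction: it asserts $\lim_{r\to0}J(r,u)=0$, that $G>0$ near $0$, and that $\frac{d}{dr}J(r,u)$ changes sign exactly once from $+$ to $-$, and then contradicts the definition of $r_*=\sup\{r>0: J(\cdot,u)\ge 0 \mbox{ on } (0,r)\}$. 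All three of these assertions rely on \eqref{u111}, i.e. $\gamma:=\frac{4a+2(d-1+2a)p}{p+2}>2$ in your notation: the term $\frac{C(r)}{2}u^2\sim c\,r^{\gamma-2}u^2(0)$ vanishes at the origin only when $\gamma>2$, and $\kappa_2>0$ in your factorization of $G$ is equivalent to $\gamma>2$. But \eqref{u111} can fail within the stated hypotheses: for $d=2$, $a=1/10$, $p=3$ one gets $\frac{2a+(d-1+2a)p}{p+2}=0.76<1$ while $p<2^*_a=20$. Your proof is insensitive to this: you use only $\liminf_{r\to0^+}J(r,u)\ge 0$, which follows from $C>0$ (exactly subcriticality, \eqref{u1110}) together with \eqref{c0}--\eqref{c00}, and your sign analysis of $G$ admits the case $\kappa_2\le 0$, in which $J(\cdot,u)$ is simply strictly decreasing to $0$ on $(0,+\infty)$. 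The resulting single-peak monotonicity argument gives $J(r,u)>0$ for all $r>0$ directly, so your route is not only correct but strictly more robust than the paper's, and it in fact repairs the gap caused by \eqref{u111} in the regime $d=2$ with $a$ and $p$ small.
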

\begin{proof}
Observe first that
\begin{align} \label{u1110}
d+2a -\frac{4a+2(d-1+2a)p}{p+2}>0,
\end{align}
due to $p<2^*_a$. In addition, it holds that
\begin{align} \label{u111}
\frac{2a+(d-1+2a)p}{p+2}-1>0.
\end{align}
From \eqref{defg1}, we then find that $G(r)>0$ for any $r>0$ small enough. It then gives that $J(\cdot, u) \not\equiv 0$. Next we are going to demonstrate that $J(r,u) \geq 0$ for any $r>0$. 
It follows from \eqref{decay11}, \eqref{c0} and \eqref{c00} that $\lim_{r \to 0} J(r,u)=0$.  
Since $G(r)>0$ for any $r>0$ small enough, then there exists $r_1>0$ such that $J(r, u)>0$ for any $0<r<r_1$. Define
$$
r_*:=\sup \left\{r>0 : J(r, u) \geq 0, \,\, 0<\tau<r\right\}.
$$
If $r^*=+\infty$, then the proof is {complete}. If not, then $r^*<+\infty$. This then indicates that $J(r,u)>0$ for any $0<r<r^*$ and
$$
J(r^*, u)=0, \quad \frac{d}{dr} J(r^*, u)<0.
$$
In view of \eqref{defg1}, \eqref{u1110} and \eqref{u111}, we are able to show that there exists a unique $r_0 > 0$ such that
$$
\frac{d}{dr} J(r, u)>0, \quad \forall \, 0<r<r_0, \quad \frac{d}{dr} J(r, u)<0, \quad \forall \, r>r_0.
$$
Then we conclude that
$$
\frac{d}{dr} J(r, u)<0, \quad \forall \, r>r^*.
$$
This is impossible, because $J(r,u) \to 0$ as $r \to +\infty$ by Theorem \ref{thmdecay} and Lemma \ref{expdecay}. 
Hence the proof is complete.
\end{proof}

\begin{thm} \label{thm1}  
It holds that there exists only one ground state to \eqref{equ} in $H^{1,a}(\R^d)$.
\end{thm}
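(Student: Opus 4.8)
The plan is to argue by contradiction, supposing $u, v \in H^{1,a}(\R^d)$ are two ground states to \eqref{equ}. By Theorem \ref{symmetry} and Theorem \ref{thmdecay}, both are radially symmetric, positive, strictly decreasing in $r = |x|$, continuous at the origin with $u(0), v(0) > 0$, and solve the singular ODE \eqref{equ1}; moreover $u', u'', v', v''$ decay exponentially by Lemma \ref{expdecay}, and $u, v$ obey the asymptotics \eqref{decay11} as $r \to 0$. After relabelling I may assume $u(0) \le v(0)$, and the goal is to show $u(0) = v(0)$ and then $u \equiv v$.

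First I would treat the case $u(0) = v(0)$. Integrating the equation $(r^{d-1+2a} u')' = r^{d-1}(u - u^{p-1})$ and using that $\tau^{d-1+2a} u'(\tau) \to 0$ as $\tau \to 0$ (which follows from \eqref{decay11}), one obtains the fixed-point identity $u(r) = u(0) + \int_0^r s^{-(d-1+2a)} \int_0^s \tau^{d-1}(u - u^{p-1})\,d\tau\,ds$, whose kernel is integrable near $0$ since $\int_0^r s^{1-2a}\,ds < +\infty$. Subtracting this identity for $u$ and for $v$ and applying a Gronwall-type estimate on $[0, r]$ for $r$ small (using $u(0) = v(0)$ and the local Lipschitz bound on $\tau \mapsto \tau - \tau^{p-1}$ along $u$ and $v$) forces $u \equiv v$ on a neighbourhood of $0$, hence $u \equiv v$ on $(0, +\infty)$ by uniqueness for the regular ODE away from the origin. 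So it remains to rule out $u(0) < v(0)$.

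Assume then $u(0) < v(0)$. By Lemma \ref{lem3}, $J(r, u) \ge 0$ for every $r > 0$ and $J(\cdot, u) \not\equiv 0$; as in the proof of Lemma \ref{lem3} one has $G(r) > 0$ near $0$, so $J(r, u) > 0$ on some interval $(0, r_1)$. By Lemma \ref{lem2}, $w := v/u$ satisfies $w'(r) < 0$ for all $r > 0$. Consider $X(r) := w^2 J(r, u) - J(r, v)$ as in \eqref{defx}. The computation \eqref{defx2} gives $X'(r) = 2 w(r) w'(r) J(r, u)$, which is $\le 0$ everywhere and strictly negative on $(0, r_1)$. On the other hand $\lim_{r \to 0} X(r) = 0$ by \eqref{decay11}, \eqref{c0} and \eqref{c00}, and $\lim_{r \to +\infty} X(r) = 0$ because $J(r, u), J(r, v) \to 0$ (Theorem \ref{thmdecay} and Lemma \ref{expdecay}) while $w$ is bounded on $[1, +\infty)$, being positive and decreasing. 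Thus $X$ is non-increasing with $X(0^+) = 0$, hence $X \le 0$ on $(0, +\infty)$; but a non-increasing function that is $\le 0$ and tends to $0$ at infinity must be identically $0$, so $X \equiv 0$ and therefore $X' \equiv 0$, contradicting $X' < 0$ on $(0, r_1)$. This contradiction yields $u(0) = v(0)$, and the previous step then gives $u \equiv v$.

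The genuinely hard work lies not in Theorem \ref{thm1} but upstream: once Lemmas \ref{lem1}--\ref{lem3} are available the deduction is short. The main obstacle is the construction and analysis of the Pohozaev functional $J(r, u)$ of \eqref{defjru}, in particular verifying $\lim_{r \to 0} J(r, u) = \lim_{r \to +\infty} J(r, u) = 0$ together with $J(\cdot, u) \not\equiv 0$ and $J(r, u) \ge 0$; these depend delicately on the blow-up of $u'$ at the origin when $1/2 < a < 1$ (controlled through \eqref{decay11}) and on the exponential decay of $u', u''$ at infinity (Lemma \ref{expdecay}). A secondary technical point, flagged above, is the well-posedness of the singular initial value problem at $r = 0$ needed to close the case $u(0) = v(0)$.
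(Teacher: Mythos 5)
Your argument is correct and follows essentially the same route as the paper: a contradiction built from $X=w^2J(\cdot,u)-J(\cdot,v)$, which by Lemmas \ref{lem2}, \ref{lem3} and \eqref{defx2} is non-increasing and somewhere strictly decreasing, yet has vanishing limits at $0$ and $+\infty$. The only difference is that you explicitly settle the case $u(0)=v(0)$ via the integral identity and a Gronwall estimate for the singular initial value problem at $r=0$ (a step the paper absorbs into its ``without restriction, $u(0)<v(0)$''), which is a correct and useful supplement rather than a different approach.
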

\begin{proof}
Suppose by contradiction that there exist two ground states $u$ and $v$ in $H^1_a(\R^d)$ to \eqref{equ}. Then we know that $u$ and $v$ satisfy \eqref{equ} with $J(r, u) \to 0$ and $J(r ,v) \to 0$ as $r \to +\infty$. Without restriction, we assume that $u(0)<v(0)$. By Lemma \ref{lem3}, we then have that $J(\cdot, u) \not\equiv 0$ and $J(\cdot, v) \not\equiv 0$. In addition, we get that $J(r, u) \geq 0$ and $J(r, v) \geq 0$ for any $r>0$. Let $X$ be defined by \eqref{defx}. We find that $\lim_{r \to 0} X(r)=0$. Moreover, by utilizing Theorem \ref{thmdecay} and Lemma \ref{expdecay}, we obtain that $\lim_{r \to +\infty} X(r)=0$. However, it follows from Lemma \ref{lem2} and \eqref{defx2} that
$$
\frac{dX}{dr} \leq 0, \quad \frac{dX}{dr} \not\equiv 0.
$$
Obviously, this is a contradiction, because there holds that $\lim_{r \to 0} X(r)=0$ and $\lim_{r \to +\infty} X(r)=0$. Then the proof is {complete}.
\end{proof}

\section{Non-degeneracy} \label{nond}

The aim of this section is to establish the non-degeneracy of the ground state $u$ to \eqref{equ} in $H_{rad}^{1,a}(\R^d)$. For this, we first need to discuss asymptotic behaviors of solutions to the corresponding linearized equation. 

\begin{lem} \label{lemdecay}
Let $v \in H^{1,a}_{rad}(\R^d)$ be a nontrivial solution to the linearized equation
\begin{align} \label{equ11}
-\nabla \cdot \left(|x|^{2a} \nabla v \right) + v=(p-1) u^{p-2}v.
\end{align}
Then it holds {that} $v \in C^{\infty}(\R^d \backslash \{0\})$ and
$$
v(r) \sim 1, \quad v'(r) =o\left(\frac 1 r\right), \quad r \to 0.
$$
Moreover, if $0<a<1/2$, then it holds that
$$
v(r) \sim r^{-\frac{d-1}{2}} e^{-\frac{1}{1-a}r^{1-a}}, \quad v'(r) \sim  r^{-\frac{d-1}{2}-a} e^{-\frac{1}{1-a}r^{1-a}} +o\left(\frac{1}{r^{\frac{d-1}{2}+2a}}e^{-\frac{1}{1-a}r^{1-a}}\right), \quad r \to +\infty.
$$
\end{lem}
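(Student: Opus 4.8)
The plan is to view \eqref{equ11}, written for the radial profile $v=v(r)$, as the linear ODE
\[
v''+\frac{d-1+2a}{r}\,v'-\frac{v}{r^{2a}}+\frac{p-1}{r^{2a}}\,u^{p-2}\,v=0,
\qquad\text{i.e.}\qquad
\bigl(r^{d-1+2a}v'\bigr)'=r^{d-1}\bigl(1-(p-1)u^{p-2}\bigr)v,
\]
and to analyse it near $r=0$ and near $r=+\infty$ separately, using at infinity the exponential decay of $u$ and of $Du$, $D^2u$ provided by Theorem \ref{thmdecay} and Lemma \ref{expdecay}.

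\emph{Smoothness and behaviour at the origin.} Away from the origin the weighted operator is uniformly elliptic with smooth coefficients (recall $u\in C^\infty(\R^d\setminus\{0\})$), so elliptic regularity, or equivalently the smoothness of the ODE coefficients on $(0,+\infty)$, gives $v\in C^\infty(\R^d\setminus\{0\})$. Near the origin $g(r):=1-(p-1)u^{p-2}(r)$ is bounded, since $u(0)>0$ and $u$ is continuous at $0$; by the local regularity theory for the weighted operator with bounded coefficients ($|x|^{2a}$ being a Muckenhoupt $A_2$ weight), $v$ is bounded and continuous up to $r=0$, and then $\nabla v\in L^2(\R^d;|x|^{2a}\,dx)$ forces $\lim_{r\to0}r^{d-1+2a}v'(r)=0$, whence
\[
r^{d-1+2a}v'(r)=\int_0^r\tau^{d-1}g(\tau)v(\tau)\,d\tau .
\]
From this one reads off $v'(r)=O(r^{1-2a})=o(1/r)$ and $v(r)=v(0)+O(r^{2-2a})$ as $r\to0$ (admissible because $a<1$), and one gets $v(0)\neq 0$: if $v(0)=0$, the twice-integrated version of this identity together with a Gronwall estimate forces $v\equiv0$ near $0$, hence $v\equiv0$ everywhere by uniqueness for the regular ODE on $(0,+\infty)$, contradicting the nontriviality of $v$. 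After normalising $v(0)=1$ this is precisely $v(r)\sim1$, $v'(r)=o(1/r)$ as $r\to0$.

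\emph{Behaviour at infinity for $0<a<1/2$.} By Lemma \ref{expdecay} the term $\tfrac{p-1}{r^{2a}}u^{p-2}v$ is exponentially smaller than $\tfrac{v}{r^{2a}}$. I would perform the Liouville change of variables $t=\tfrac{r^{1-a}}{1-a}$, which turns \eqref{equ11} into a perturbed modified Bessel equation $v_{tt}+\tfrac{c_1}{t}v_t-v+q(t)v=0$, with $c_1$ an explicit constant and $q$ decaying exponentially, hence integrable near $t=+\infty$. A Levinson-type asymptotic theorem (equivalently, error-controlled Liouville--Green/WKB, or a contraction-mapping construction via variation of parameters) then provides a fundamental system of \eqref{equ11} consisting of two solutions with the advertised algebraic prefactor times $e^{\pm\frac{1}{1-a}r^{1-a}}$, together with the matching asymptotics for the derivatives. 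Since $v\in H^{1,a}(\R^d)\subset L^2(\R^d)$, the exponentially growing branch is absent, so $v(r)\sim c\,r^{-(d-1)/2}e^{-\frac{1}{1-a}r^{1-a}}$ with $c\neq0$ (were $c=0$, we would obtain $v\equiv0$). For $v'$, I would integrate $\bigl(r^{d-1+2a}v'\bigr)'=r^{d-1}g(r)v$ from $r$ to $+\infty$ — legitimate because the decay just obtained gives $r^{d-1+2a}v'\to0$ — split off the exponentially small contribution of $u^{p-2}$, and evaluate the remaining integral by Laplace's method; the hypothesis $a<1/2$ is exactly what makes the subleading term an $o\bigl(r^{-(d-1)/2-2a}e^{-\frac{1}{1-a}r^{1-a}}\bigr)$.

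\emph{Main obstacle.} The delicate step is the rigorous control of the asymptotics at infinity: constructing a fundamental system of \eqref{equ11} with genuinely controlled error terms (this is where the exponential, hence $L^1$-at-infinity, decay of $u^{p-2}$ from Lemma \ref{expdecay} feeds into the Levinson/Liouville--Green theorem), and then arguing that the \emph{global} constraint $v\in L^2(\R^d)$ selects the decaying mode rather than a combination of the two branches. A secondary subtlety is the claim $v(0)\neq0$, which relies on a uniqueness property for the ODE at the singular point $r=0$; this is supplied by the integral reformulation above and is where the condition $a<1$ enters.
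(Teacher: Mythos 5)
Your proposal is correct in outline and, at infinity, is essentially the paper's own argument: the paper substitutes $\zeta=r^{\frac{d-1+2a}{2}}v$, compares the resulting equation with $\eta''=r^{-2a}\eta$, and invokes the asymptotic-integration (Levinson-type) theorems of \cite{H}, whose integrability hypothesis $\int^{+\infty}r^{2a-2}\,dr<+\infty$ is exactly where $a<1/2$ enters; membership of $v$ in $H^{1,a}_{rad}(\R^d)$ then discards the growing branch, precisely as in your Liouville--Green/Levinson plan (your change of variable $t=\frac{r^{1-a}}{1-a}$ is an equivalent normalization of the same computation). The genuine difference is at the origin: the paper stays entirely inside ODE theory, changing variables $r\mapsto 1-r$ and applying Hartman's lemma on principal and nonprincipal solutions to produce a fundamental system $w_1\sim 1$, $w_2\sim r^{-(d-2+2a)}$ together with derivative asymptotics, and then uses $v\in H^{1,a}_{rad}(\R^d)$ to eliminate the singular branch; you instead import boundedness and continuity of $v$ at $0$ from weighted ($A_2$, Fabes--Kenig--Serapioni type) elliptic regularity and then exploit the integral identity $r^{d-1+2a}v'(r)=\int_0^r\tau^{d-1}\bigl(1-(p-1)u^{p-2}\bigr)v\,d\tau$ plus a Gronwall argument. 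Both routes work: the paper's is self-contained (no weighted regularity theory is needed, the $H^{1,a}$-selection doing the job of your regularity citation), while yours yields the slightly more precise rates $v'=O(r^{1-2a})$, $v=v(0)+O(r^{2-2a})$ and makes $v(0)\neq 0$ explicit rather than implicit. Two points you should still write out to match the paper's level of detail: the algebraic prefactor at infinity is not quoted in the paper but produced by the comparison equation, and in your scheme it must be computed by removing the first-order term $\frac{c_1}{t}v_t$ through a further power of $t$, so that step (and the Laplace-method evaluation giving the $o\bigl(r^{-\frac{d-1}{2}-2a}e^{-\frac{1}{1-a}r^{1-a}}\bigr)$ error for $v'$) should be carried out explicitly rather than asserted.
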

\begin{proof}
Since $u \in C^{\infty}(\R^d \backslash \{0\})$ by Theorem \ref{thmdecay}, by using the standard bootstrap procedure, then we have that $v \in C^{\infty}(\R^d \backslash \{0\})$. Let us first consider the asymptotic behaviors of $v$ and $v'$ as $r$ approach zero. Since $v \in H^{1,a}_{rad}(\R^d)$ is a solution to \eqref{equ11}, then it satisfies the ordinary differential equation
\begin{align} \label{ode111}
\left(r^{d-1+2a} v'\right)'-r^{d-1} \left(1-(p-1)u^{p-2}\right)v=0.
\end{align}
Define $w(r)=v(1-r)$ for $r > 0$. It then follows that $w$ satisfies the equation
\begin{align} \label{equ112}
\left((1-r)^{d-1+2a} w'\right)'-(1-r)^{d-1} \left(1-(p-1)h(r)\right)w=0,
\end{align}
where $h(r):=\left(u(1-r)\right)^{p-2}$ for $r>0$. It is clear that \eqref{equ112} is equivalent to the binary system of the form
\begin{align} \label{sys}
\left\{
\begin{aligned}
&w'=\frac{1}{(1-r)^{d-1+2a}} z, \\
& z'=(1-r)^{d-1} \left(1-(p-1)h(r)\right)w.
\end{aligned}
\right.
\end{align}
Observe that
$$
\int^{1} (1-r)^{d-1} \left|1-(p-1)h(r) \right| \,dr<+\infty,
$$
$$
\int^{1} (1-r)^{d-1} \left|1-(p-1)h(r)\right| \int^r \frac{1}{(1-s)^{d-1+2a}} \,ds dr<+\infty,
$$
where the notation
$
\int^{\tau} f(r) \,dr
$
denotes the integral of $f$ over a left neighborhood of $\tau \leq + \infty$. In addition, it holds that
$$
\int^{1} \frac{1}{(1-r)^{d-1+2a}} \,dr=+\infty.
$$
Now, by taking into account \cite[Lemma 9.3]{H}, we derive that there exist two linearly independent solutions $w_1$ and $w_2$ to \eqref{equ112} such that
$$
w_1 \sim 1, \quad w_2 \sim \frac{1}{(1-r)^{d-2+2a}}, \quad r \to 1.
$$
Meanwhile, we derive that
$$
z_1=o\left((1-r)^{d-2+2a}\right), \quad z_2 \sim 1,  \quad r \to 1.
$$
At this point, applying \eqref{sys}, we conclude that
$$
w_1'=o\left(\frac {1}{1-r}\right), \quad w_2' \sim \frac{1}{(1-r)^{d-1+2a}}, \quad r \to 1.
$$
Since $v \in H^{1,a}_{rad}(\R^d)$, by making a change of variable with $1-r \mapsto r$, then we have that 
$$
v(r) \sim 1, \quad v'(r) =o\left(\frac 1 r\right), \quad r \to 0.
$$
This yields the desired conclusion.

Next, we shall investigate the asymptotic behaviors of $v$ and $v'$ as $r$ goes to infinity. Define 
\begin{align} \label{zv}
\zeta(r):=r^{\frac{d-1+2a}{2}} v(r), \quad r>0.
\end{align}
Since $v$ solves \eqref{equ112}, then we get that $\zeta$ satisfies the equation
\begin{align} \label{equz}
\zeta''-\frac{d-1+2a}{2} \left(\frac{d-1+2a}{2}+1\right)\frac{\zeta}{r^2}-\frac{\zeta}{r^{2a}} +\frac{p-1}{r^{2a}} u^{p-2}\zeta=0.
\end{align}
Let $\eta_1$ and $\eta_2$ be two linearly independent solutions to the equation
$$
\eta''-\frac{1}{r^{2a}} \eta=0.
$$
Observe that
$$
\int^{+\infty} \frac{1}{r^a} \,dr=+\infty, \quad \frac{a(2-a)}{a}\int^{+\infty} \frac{1}{r^{2-a}} \,dr<+\infty.
$$
It then follows from \cite[Exercise 9.6]{H} that
$$
\eta_1 \sim r^a e^{-\frac{1}{1-a}r^{1-a}}, \quad \eta_2 \sim  r^a e^{\frac{1}{1-a}r^{1-a}}, \quad r \to +\infty.
$$
In addition, it holds that
$$
\eta_1' \sim e^{-\frac{1}{1-a}r^{1-a}}, \quad \eta_2' \sim e^{\frac{1}{1-a}r^{1-a}}, \quad r \to +\infty.
$$
Since $u$ decays exponentially at infinity by Theorem \ref{thmdecay} and $0<a<1/2$, then we arrive at
$$
\int^{+\infty} \left|\frac{d-1+2a}{2} \left(\frac{d-1+2a}{2}+1\right) \frac{1}{r^2}-\frac{p-1}{r^{2a}}u^{p-2}\right| r^{2a}\,dr<+\infty.
$$
It then follows from \cite[Theorem 9.1]{H} that there exist two linearly independent solutions to \eqref{equz} such that
$$
\zeta_1 \sim \eta_1, \quad \zeta_2 \sim \eta_2, \quad r \to + \infty,
$$
$$
\frac{\zeta_1'}{\zeta_1} \sim \frac{1}{r^a}+o\left(\frac{1}{r^{2a}}\right), \quad \frac{\zeta_2'}{\zeta_2} \sim \frac{1}{r^a}+o\left(\frac{1}{r^{2a}}\right), \quad r \to +\infty.
$$
Since $v \in H^{1,a}_{rad}(\R^d)$, by applying \eqref{zv}, then we obtain the desired conclusion. This completes the proof.
\end{proof}

\begin{lem} \label{sign}
Let $v \in H_{rad}^{1,a}(\R^d)$ be a nontrivial solution to \eqref{equ11}. Then there exists a unique $r_0>0$ such that $v(r_0)=0$ and $v$ changes sign at $r_0$.
\end{lem}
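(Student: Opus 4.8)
The plan is to read \eqref{equ11} spectrally. Let $L$ denote the self‑adjoint operator on $L^2_{rad}(\R^d):=\{f\in L^2(\R^d):f\text{ radial}\}$ associated with the quadratic form
$$
Q(\phi):=\int_{\R^d}\Big(|x|^{2a}|\nabla\phi|^2+\phi^2-(p-1)u^{p-2}\phi^2\Big)\,dx,\qquad \phi\in H^{1,a}_{rad}(\R^d),
$$
so that a nontrivial $v\in H^{1,a}_{rad}(\R^d)$ solves \eqref{equ11} precisely when $Lv=0$. First I would record, using Lemma \ref{lemdecay}, that $v\in C^\infty(\R^d\setminus\{0\})$, that $v(r)$ tends to a nonzero constant and $v'(r)=o(1/r)$ as $r\to0$, and that $v$ together with $v'$ decays at infinity; since moreover $\zeta(r)=r^{(d-1+2a)/2}v(r)$ is non‑oscillatory for large $r$ by \eqref{equz} (the coefficient there is nonnegative for $r$ large, as $u^{p-2}\to0$), the function $v$ has only finitely many zeros, all in $(0,+\infty)$, and each is simple (otherwise $v\equiv0$ by uniqueness for \eqref{ode111}); hence $v$ changes sign at each of them and it remains to prove that $v$ has exactly one zero. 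Because $u$ decays exponentially (Theorem \ref{thmdecay}), the term $(p-1)u^{p-2}$ is a relatively compact perturbation of $-\nabla\cdot(|x|^{2a}\nabla)+1$, so $\inf\sigma_{\mathrm{ess}}(L)\ge 1$; consequently the spectrum of $L$ below $1$ consists of finitely many \emph{simple} eigenvalues $E_0<E_1<\cdots$, and $0$ is one of them since $v\in\mathrm{Dom}(L)$ with $Lv=0$.

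Next I would locate the eigenvalue $0$. Testing $Q$ with $u$ and using the identity $\int_{\R^d}(|x|^{2a}|\nabla u|^2+u^2)\,dx=\int_{\R^d}u^p\,dx$ gives $Q(u)=-(p-2)\int_{\R^d}u^p\,dx<0$, so $E_0<0$. On the other hand, the ground state $u$ has Morse index one: a two‑dimensional subspace on which $Q<0$ would meet the tangent space $T_u\mathcal N$ (of codimension one) in a direction along which $E$ decreases to second order, contradicting $E[u]=\min_{\mathcal N}E$; restricting to radial functions cannot increase the number of negative directions, so $L$ has exactly one eigenvalue below $0$. Therefore $0=E_1$ is the second eigenvalue of $L$.

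Finally I would count the zeros of $v$ by truncation and min‑max. Suppose $v$ had two zeros $0<r_1<r_2$. Then $v$ has constant sign on each of $I_1=(0,r_1)$, $I_2=(r_1,r_2)$, $I_3=(r_2,+\infty)$; set $\phi_i:=v\,\mathbf 1_{I_i}$, extended by $0$ and regarded as a radial function on $\R^d$. Using the behaviour of $v$ near the origin (here the condition $d+2a>2$, valid since $d\ge2$ and $a>0$, is what makes $\int_{\R^d}|x|^{2a}|\nabla\phi_i|^2\,dx<+\infty$ near $0$) one checks $\phi_i\in H^{1,a}_{rad}(\R^d)$; integrating by parts with $\big(r^{d-1+2a}v'\big)'=\big(1-(p-1)u^{p-2}\big)r^{d-1}v$ and noting that the boundary contributions vanish (at $0$ and $+\infty$ by the asymptotics of $v$, at $r_1,r_2$ because $v$ vanishes there) yields $Q(\phi_i)=0$; and since the $\phi_i$ have almost everywhere disjoint supports, the cross terms vanish, so $Q\equiv0$ on the three‑dimensional space $W=\mathrm{span}\{\phi_1,\phi_2,\phi_3\}$. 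By the min‑max principle, $E_2\le\sup_{0\ne\phi\in W}Q(\phi)/\|\phi\|_2^2=0$, which together with $E_0<0\le E_1$ forces $E_1=E_2=0$, contradicting the simplicity of the eigenvalues of $L$ (equivalently, the limit‑point property at $+\infty$, which makes $\ker L$ at most one‑dimensional). Hence $v$ has at most one zero. It also has at least one: a one‑signed solution of $Lv=0$ would be a positive (or negative) eigenfunction, hence the ground state of $L$, forcing $0=E_0$ and contradicting $E_0<0$. Thus $v$ has exactly one zero $r_0$, at which it changes sign.

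The main obstacle is the spectral bookkeeping rather than the ODE manipulations: one must pin down that $\inf\sigma_{\mathrm{ess}}(L)>0$ and that the Morse index of the ground state equals one, so that $0$ is precisely the second eigenvalue of $L$, and one must verify that the truncated functions $\phi_i$ still lie in $H^{1,a}_{rad}(\R^d)$. It is exactly the degeneracy of the weight $|x|^{2a}$ at the origin — controlled here by the near‑origin asymptotics of $v$ furnished by Lemma \ref{lemdecay} — that makes this last point require care.
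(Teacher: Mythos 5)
Your proposal is correct and rests on the same spectral skeleton as the paper's proof: pass to the self-adjoint radial operator generated by the quadratic form, show $\inf\sigma_{ess}\geq 1$, use the Morse index one of the ground state to conclude there is exactly one negative eigenvalue, and deduce that $0$ is the second eigenvalue. Where you genuinely differ is in how the two key steps are executed. For the existence of a zero, the paper integrates the equations for $u$ and $v$ against each other to obtain the orthogonality identity $\int_0^{+\infty}r^{d-1}u^{p-1}v\,dr=0$ (its \eqref{id1}), which forces a sign change since $u>0$; you instead argue that a one-signed kernel element would have to coincide with the lowest eigenfunction, contradicting $E_0<0$ --- this works, but it tacitly uses that the lowest eigenfunction can be taken nonnegative (e.g.\ via $Q(|\phi|)\leq Q(\phi)$) and that a one-signed solution of the ODE never vanishes, each worth a line; note also that the paper's identity is not wasted effort, since it is reused in the proof of Theorem \ref{nd}. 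For the uniqueness of the zero, the paper invokes the oscillation theorem \cite[Theorem 14.10]{W} for the second eigenfunction, whereas you run a Courant-type nodal-domain argument: truncate $v$ at two zeros, verify the truncations lie in $H^{1,a}_{rad}(\R^d)$ (you rightly flag the weight at the origin, where $d-2+2a>0$ is what saves the membership), obtain a three-dimensional subspace on which $Q\leq 0$, and contradict the simplicity of the eigenvalue $0$, which you justify by the limit-point property at infinity. Your route is more self-contained in that it avoids the Sturm oscillation machinery, at the cost of these extra verifications (simplicity of the kernel, membership of the truncations, nonnegativity of the ground eigenfunction); the paper's route outsources exactly those points to \cite{W} but must prove the explicit identity to get the sign change.
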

\begin{proof}
Define
$$
\mathcal{B}(\varphi, \psi):=\int_{\R^d} |x|^{2a} \nabla \varphi \cdot \nabla \psi + \varphi \psi -(p-1)u^{p-2}\varphi \psi \,dx, \quad \forall\,\, \varphi, \psi \in H^{1,a}_{rad}(\R^d).
$$
Then we know that there exists a unique self-adjoint operator $T: \mathcal{D}(T) \subset L^2_{rad}(\R^d)\to L^2_{rad}(\R^d)$ such that $T \varphi = \phi$ and
$$
\mathcal{D}(T) :=\left\{ \varphi \in H^{1,a}(\R^d) : \phi \in L^2_{rad}(\R^d), \mathcal{B}(\varphi, \psi)=\langle \phi, \psi \rangle, \forall \,\, \psi \in H^{1,a}_{rad}(\R^d) \right\}.
$$
Since $v \in H_{rad}^{1,a}(\R^d)$ is a {nontrivial} solution to \eqref{equ11}, then $\mathcal{B}(v, \psi)=0$ for any $\psi \in H^{1,a}_{rad}(\R^d)$. It then follows that $T v=0$. Since $Tv=0$ is is non-oscillatory, by \cite[Theorem 14.9]{W}, we then conclude that $T$ is bounded from below. Let $\varphi \in H_{rad}^{1,a}(\R^d)$ be such that $(T - \lambda) \varphi=0$ for $\lambda<1$. Since $(T - \lambda) \varphi=0$ is non-oscillatory, by \cite[Theorem 14.9]{W}, we then get that $\inf \sigma_{ess}(T) \geq 1$, where $ \sigma_{ess}(T)$ denotes the essential spectrum of the operator $T$. Since $Tv=0$, then $0$ is an eigenvalue of $T$. Moreover, we know that it is an isolated eigenvalue with finite multiplicity, because of $\sigma_{ess}(T) \geq 1$. Observe that
\begin{align} \label{equ13}
v''+\frac{d-1+2a}{r}v'-\frac{v}{r^{2a}}+\frac{p-1}{r^{2a}}u^{p-2}v=0,
\end{align}
\begin{align} \label{equ14}
u''+\frac{d-1+2a}{r}u'-\frac{u}{r^{2a}}+\frac{u^{p-1}}{r^{2a}}=0.
\end{align}
It is not hard to compute by integration by parts, Theorem \ref{thmdecay} and Lemmas \ref{expdecay} and \ref{lemdecay} that
$$
\int_0^{+\infty} \left(v'' + \frac{d-1+2a}{r}v' \right) r^{d-1+2a} u \,dx=\int_0^{+\infty} \left(u'' + \frac{d-1+2a}{r}u' \right) r^{d-1+2a} v \,dx.
$$
Therefore, by combining \eqref{equ13} and \eqref{equ14}, we know that
$$
\int_0^{+\infty}r^{d-1} \left(1-(p-1)u^{p-2}\right) uv \,dx=\int_0^{+\infty} r^{d-1} \left(1-u^{p-2}\right) uv \,dx.
$$
It then infers that
\begin{align} \label{id1}
\int_0^{+\infty}r^{d-1} u^{p-1}v \,dr=0.
\end{align}
Hence we know that $v$ changes sign in $(0, +\infty)$, because of $u>0$. It then follows from \cite[Theorem 14.10]{W} that $v$ is not an eigenfunction of the smallest eigenvalue of $T$. As a consequence, we derive that $T$ admits at least one negative eigenvalue. Indeed, we are able to show that there exists only one negative eigenvalue for the operator $T$. Contrarily, we assume that there exist two negative eigenvalues $\lambda_2<\lambda_1<0$ corresponding to the operator $T$. Therefore, there exist {nontrivial} $\varphi_1, \varphi_2 \in H_{rad}^{1,a}(\R^d)$ such that
$$
T \varphi_1=\lambda_1 \varphi_1, \quad T \varphi_2=\lambda_2 \varphi_2.
$$
Then we arrive at
\begin{align}\label{e1}
\mathcal{B}(\varphi_1, \varphi_2)=\lambda_1 \langle \varphi_1, \varphi_2 \rangle,
\quad \mathcal{B}(\varphi_2, \varphi_1)=\lambda_1 \langle \varphi_2, \varphi_1 \rangle.
\end{align}
Note that $\mathcal{B}(\varphi_1, \varphi_2)=\mathcal{B}(\varphi_2, \varphi_1)$. It then follows that $\langle \varphi_1, \varphi_2 \rangle=0$ and $\mathcal{B}(\varphi_1, \varphi_2)=0$, because of $\lambda_1 \neq \lambda_2$. Since $u \in H^{1,a}(\R^d)$ is a ground state to \eqref{equ}, then its Morse index is one. In addition, we know that $\mathcal{B}(u,u)<0$. This implies that $\mathcal{B}(\varphi, \varphi) \geq 0$ for any $\varphi \in u^{\bot}$, where
$$
u^{\bot}:=\left\{ u \in H_{rad}^{1,a}(\R^d): \langle \varphi, u \rangle=0\right\}.
$$
We now take $\alpha_1,\alpha_2 \in \R$ such that $\alpha_1 \varphi_1 + \alpha_2 \varphi_2 \in u^{\bot}$. Since $\mathcal{B}(\varphi_1, \varphi_2)=0$, by \eqref{e1}, then
\begin{align*}
0 \leq \mathcal{B}(\alpha_1 \varphi_1 + \alpha_2 \varphi_2, \alpha_1 \varphi_1 + \alpha_2 \varphi_2 )&=\alpha_1^2\mathcal{B}(\varphi_1, \varphi_1)+\alpha_2^2\mathcal{B}(\varphi_2, \varphi_2)\\
&=\alpha_1^2 \langle T \varphi_1, \varphi_1 \rangle +\alpha_2^2 \langle T \varphi_2, \varphi_2 \rangle \\
&=\alpha_1^2 \lambda_1 \langle \varphi_1, \varphi_1 \rangle+\alpha_2^2 \lambda_2 \langle \varphi_2, \varphi_2 \rangle<0,
\end{align*}
which is impossible. Thereby, we conclude that $T$ has only one negative eigenvalue. It then infers that $0$ is the second eigenvalue of the operator $T$. At this point, using \cite[Theorem 14.10]{W}, we then derive that $v$ has exactly one zero in $(0, +\infty)$. This completes the proof.
\end{proof}

\begin{thm} \label{nd}
It holds that $Ker [\mathcal{L}_+]=0$ in $H^{1,a}_{rad}(\R^d)$.
\end{thm}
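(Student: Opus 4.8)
The plan is to argue by contradiction, transplanting the Pohozaev-functional machinery of Section~\ref{uniqueness} from a pair of ground states to the pair $(u,v)$, where $v$ is a would-be kernel element. Suppose $\ker[\mathcal{L}_+]\neq 0$ in $H^{1,a}_{rad}(\R^d)$, so that there is a nontrivial radial $v\in H^{1,a}_{rad}(\R^d)$ solving \eqref{equ11}. By Lemma~\ref{lemdecay}, $v\in C^{\infty}(\R^d\setminus\{0\})$ with $v(r)\sim 1$, $v'(r)=o(1/r)$ as $r\to 0$, together with the sharp (exponential in $r^{1-a}$) decay of $v$ and $v'$ at infinity; by Lemma~\ref{sign}, after replacing $v$ by $-v$ if necessary, there is a unique $r_0>0$ with $v>0$ on $(0,r_0)$ and $v<0$ on $(r_0,+\infty)$.

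First I would record the Wronskian-type identity for $(u,v)$: writing $\rho(r)=r^{d-1+2a}$ and using that $u$ solves \eqref{equ1} and $v$ solves \eqref{equ11} in radial form, one obtains
$$
\bigl(\rho\,u^2 (v/u)'\bigr)'=-(p-2)\,r^{d-1}\,u^{p-1}v ,
$$
and the boundary term $\rho\,u^2(v/u)'=\rho\,(u v'-u'v)$ tends to $0$ as $r\to 0$ by the asymptotics above. Integrating and combining with the orthogonality relation $\int_0^{+\infty}r^{d-1}u^{p-1}v\,dr=0$ (which is \eqref{id1}, established in the proof of Lemma~\ref{sign}) and the sign pattern of $v$, one sees that $\int_0^r\tau^{d-1}u^{p-1}v\,d\tau>0$ for every $r>0$, whence
$$
\left(\frac{v}{u}\right)'(r)<0\qquad\text{for all }r>0 .
$$

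Next I would introduce the linearized Pohozaev quantity
$$
Y(r):=2\,\frac{v(r)}{u(r)}\,J(r,u)-L(r),
$$
$$
L(r):=A(r)\,u'v'+B(r)(u'v+uv')+C(r)\,uv-\frac{A(r)}{r^{2a}}\,uv+\frac{A(r)}{r^{2a}}\,u^{p-1}v ,
$$
with $A,B,C,G$ as in Section~\ref{uniqueness}; here $L$ is the directional derivative of $\phi\mapsto J(r,\phi)$ at $u$ in the direction $v$. A direct computation from \eqref{equ1} and \eqref{equ11} gives $L'(r)=2G(r)u(r)v(r)$ --- precisely the linearization of the identity $\frac{d}{dr}J(r,u)=G(r)u^2$ in \eqref{defg} --- and combining these two relations yields the clean formula
$$
Y'(r)=2\left(\frac{v}{u}\right)'(r)\,J(r,u) .
$$
By Lemma~\ref{lem3}, $J(r,u)\geq 0$ for all $r>0$ and $J(\cdot,u)\not\equiv 0$, so with $(v/u)'<0$ we get $Y'\leq 0$ and $Y'\not\equiv 0$; hence $Y$ is non-increasing and non-constant on $(0,+\infty)$. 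On the other hand, $\lim_{r\to 0}J(r,u)=0$ (Lemma~\ref{lem3}) and the asymptotics of $u,u',v,v'$ at the origin give $\lim_{r\to 0}L(r)=0$, so $\lim_{r\to 0}Y(r)=0$; likewise the exponential decay of $u,u',u''$ (Theorem~\ref{thmdecay}, Lemma~\ref{expdecay}) and of $v,v'$ (Lemma~\ref{lemdecay}), together with $\lim_{r\to+\infty}J(r,u)=0$, give $\lim_{r\to+\infty}Y(r)=0$. A non-constant, non-increasing function cannot take equal values at both ends of its interval of definition, so this is the desired contradiction; therefore $\ker[\mathcal{L}_+]=0$ in $H^{1,a}_{rad}(\R^d)$.

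I expect the main obstacle to be the endpoint analysis of $Y$, above all as $r\to 0$: the weights $A,B,C$ carry negative powers of $r$ while $u'$ blows up for $1/2<a<1$ and $v'$ is controlled only as $o(1/r)$, so verifying $L(r)\to 0$ (and that $Y$ genuinely vanishes at the origin) requires the precise leading-order expansions of $u,u',v$ together with the exponent inequalities \eqref{c0}, \eqref{c00}, \eqref{u1110}. At infinity the analogous step needs the sharp two-sided decay rates of $u$ and $v$ so that $\frac{v}{u}J(r,u)\to 0$, which for $1/2\le a<1$ must first be upgraded from the $0<a<1/2$ statement of Lemma~\ref{lemdecay}. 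The remaining ingredients --- the Wronskian identity, the sign $(v/u)'<0$, and the formula for $Y'$ --- are then routine once these asymptotics are secured.
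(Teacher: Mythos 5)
Your argument is correct in outline but takes a genuinely different route from the paper. The paper never introduces $v/u$ or the Pohozaev function in this section: it multiplies the linearized equation \eqref{equ13} by $r^{d-1+2a}\,r u'$ and integrates by parts to produce a \emph{second} orthogonality relation $\int_0^{+\infty}r^{d-1}(u-u^{p-1})v\,dr=0$ (its \eqref{id2}), which, combined with \eqref{id1} and the choice $\alpha=1-u^{2-p}(r_0)$, contradicts the single sign change of $v$ and the strict monotonicity of $u$. You instead linearize the Yanagida machinery of Section \ref{uniqueness}: your Wronskian identity $(\rho(uv'-u'v))'=-(p-2)r^{d-1}u^{p-1}v$ is correct, and together with \eqref{id1} and the sign pattern of $v$ it does give $(v/u)'<0$ on $(0,+\infty)$; your identities $L'=2G\,uv$ and $Y'=2(v/u)'J(r,u)$ check out against \eqref{defdj}--\eqref{defg1}, and the limit $Y(r)\to0$ as $r\to0$ follows from the stated asymptotics and the exponent inequalities exactly as you say. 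What your route buys is a transparent reuse of Lemma \ref{lem3} and the monotonicity formula; what the paper's route buys is that it never divides by $u$, so all boundary terms at infinity involve only products of $u,u',v,v'$ with polynomial weights and are covered by Theorem \ref{thmdecay} and Lemmas \ref{expdecay}, \ref{lemdecay}.

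The one substantive caveat is the endpoint $r\to+\infty$, which you flag yourself: the term $\tfrac{v}{u}J(r,u)$ contains $A\,(u')^2v/u$, and the paper provides only an \emph{upper} exponential bound on $u$, so you need either a lower bound on $u$ of the same exponential order or a Riccati-type estimate $|u'|\lesssim r^{-a}u$ for large $r$ (both provable from \eqref{equ111} but not available off the shelf in the paper); in addition, the decay of $v,v'$ at infinity in Lemma \ref{lemdecay} is stated only for $0<a<1/2$. To be fair, the paper's own uniqueness proof (Theorem \ref{thm1}) already asserts $X(r)\to0$ at infinity for a quantity containing $v^2(u')^2/u^2$, so your gap is of the same nature as one the paper tolerates; but if you want a self-contained argument at the level of the stated lemmas, you should either supply the lower bound/Riccati estimate for $u$ (and the $a\ge 1/2$ decay of $v$), or switch at this point to the paper's identity-based conclusion, which avoids the division by $u$ altogether.
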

\begin{proof}
Let us argue by contradiction that $Ker [\mathcal{L}_+] \neq 0$ in $H^{1,a}_{rad}(\R^d)$. This means that there exists a {nontrivial} $v \in H_{rad}^{1,a}(\R^d)$ satisfying \eqref{equ11}. By Lemma \ref{sign}, we know that $v$ has exactly one zero in $(0, +\infty)$. Without restriction, we may assume that there exists $r_0>0$ such that $v(r)<0$ for any $0<r<r_0$ and $v(r)>0$ for any $r>r_0$. Now multiplying \eqref{equ13} by $r^{d-1+2a} ru'$ and integrating on $(0, +\infty)$ leads to
$$
\int_0^{+\infty} \left(r^{d-1+2a}v'\right)' ru'-r^{d}\left(1-(p-1)u^{p-2}\right)vu'\,dr=0.
$$ 
Making use of integration by parts together with Theorem \ref{thmdecay} and Lemmas \ref{expdecay} and \ref{lemdecay}, we then have that
\begin{align} \label{i1}
\int_0^{+\infty} \left(r^{d-1+2a}v'\right) (ru')'+r^{d}\left(1-(p-1)u^{p-2}\right)vu' \,dr=0.
\end{align}
Invoking \eqref{equ14}, we know that
$$
(ru')'=ru''+u'=(2-d-2a)u'+r\left(\frac{u}{r^{2a}}-\frac{u^{p-1}}{r^{2a}} \right),
$$
$$
 \left(r^{d-1+2a}u'\right)'=r^{d-1}\left(u-u^{p-1} \right).
$$
As a consequence, using again integration by parts along with Theorem \ref{thmdecay} and Lemmas \ref{expdecay} and \ref{lemdecay}, we get that
\begin{align*}
\int_0^{+\infty} \left(r^{d-1+2a}v'\right) (ru')' \,dr&=(2-d-2a)\int_0^{+\infty} \left(r^{d-1+2a}u'\right)v'\,dr+\int_0^{+\infty}r^d \left(1-u^{p-2}\right)uv'\,dr \\
&=-(2-d-2a)\int_0^{+\infty} r^{d-1}\left(u-u^{p-1}\right)\,dr+\int_0^{+\infty}r^d \left(1-u^{p-2}\right)uv'\,dr.
\end{align*}
Taking into account \eqref{i1}, we then obtain that
$$
-(d-2+2a) \int_0^{+\infty}r^{d-1}\left(u-u^{p-1} \right) v\,dr + \int_0^{+\infty}r^d \left(\left(1-u^{p-2}\right)uv' +\left(1-(p-1)u^{p-2}\right)vu'\right)\,dr=0.
$$
Moreover, integrating by parts and employing Theorem \ref{thmdecay} and Lemmas \ref{expdecay} and \ref{lemdecay}, we see that
\begin{align*}
\int_0^{+\infty}r^d \left(1-u^{p-2}\right)uv' \,dr
&=-d\int_0^{+\infty} r^{d-1}\left(1-u^{p-2}\right)uv \,dr-\int_0^{+\infty}r^d \left(1-u^{p-2}\right)u'v \,dr \\
& \quad +(p-2)\int_0^{+\infty}r^d u^{p-2}u'v \,dr \\
&=-d\int_0^{+\infty} r^{d-1}\left(u-u^{p-1}\right)v \,dr-\int_0^{+\infty}r^d \left(1-(p-1)u^{p-2}\right)u'v \,dr.
\end{align*}
Therefore, we derive that
\begin{align} \label{id2}
\int_0^{+\infty}r^{d-1}\left(u-u^{p-1}\right)v \,dr=0.
\end{align}
{Combining} \eqref{id1} and \eqref{id2}, we have that, for any $\alpha \in \R$,
$$
\alpha \int_0^{+\infty}r^{d-1} u^{p-1} v \,dr+\int_0^{+\infty}r^{d-1}\left(u-u^{p-1}\right)v \,dr=0,
$$
Consequently, it holds that
$$
\int_0^{+\infty} r^{d-1} u^{p-1}\left(\alpha + u^{2-p}-1\right) v \,dr=0.
$$
Since $u$ is strictly decreasing on $(0, +\infty)$, then $u^{2-p}(r)-1<u^{2-p}(r_0)-1$ for any $r<r_0$ and $u^{2-p}(r)-1>u^{2-p}(r_0)-1$ for any $r>r_0$.
At this point, choosing $\alpha=1-u^{2-p}(r_0)$ and noting that $v(r)<0$ for any $0<r<_0$ and $v(r)>0$ for any $r>r_0$, we then get that
$$
\int_0^{+\infty} r^{d-1} u^{p-1}\left(\alpha + u^{2-p}-1\right) v \,dr>0.
$$
We then reach a contradiction. This completes the proof.
\end{proof}

\end{document}